\newlength{\defbaselineskip}
\newcommand{\setlinespacing}[1]%
           {\setlength{\baselineskip}{#1 \defbaselineskip}}
\theoremstyle{plain}
\newtheorem{thm}{Theorem}[section]
\newtheorem{cor}[thm]{Corollary}
\newtheorem{pro}[thm]{Problem }
\newtheorem{lem}[thm]{Lemma}
\theoremstyle{definition}
\newtheorem{ass}{Assumption}[section]
\newcommand{\la}{\langle}
\newcommand{\ra}{\rangle}
\makeatletter\@addtoreset{equation}{section} \makeatother
\begin{document}
\title{Optimal Variational Principle for Backward Stochastic Control Systems Associated with L\'{e}vy
Processes \thanks{This work is partially supported by the National
Basic Research Program of China (973 Program) (Grant
No.2007CB814904), the National Natural Science Foundation of China
(Grants No.10325101, 11071069), the Specialized Research Fund for
the Doctoral Program of Higher Education of China (Grant
No.20090071120002) and the Innovation Team Foundation of the
Department of Education of Zhejiang Province (Grant No.T200924).}}

\date{}

\author{Maoning Tang$^{a}$\hspace{1cm}Qi Zhang$^{b}$\hspace{1cm}
\\ \small{$^{a}$Department of Mathematical Sciences, Huzhou University, Zhejiang 313000, China}\\
\small{Email: tmorning@hutc.zj.cn}
\\ \small{$^{b}$School of Mathematical Sciences, Fudan University, Shanghai 200433, China}
\\ \small{Email: qzh@fudan.edu.cn}}

\maketitle

\begin{abstract}

The paper is concerned with optimal control of  backward stochastic
differential equation (BSDE) driven by Teugel's martingales and an
independent multi-dimensional Brownian motion, where Teugel's
martingales are a family of pairwise strongly orthonormal
martingales associated with L\'{e}vy processes (see Nualart and
Schoutens \cite{NuSc}). We derive the necessary and sufficient
conditions for the existence of the optimal control by means of
convex variation methods and duality techniques. As an application,
the optimal control problem of linear backward stochastic
differential equation with a quadratic cost criteria (called
backward linear-quadratic problem, or BLQ problem for short) is
discussed and characterized by stochastic Hamilton system.
\end{abstract}

\textbf{Keywords}: stochastic control, stochastic maximum
principle, L\'{e}vy processes, Teugel's martingales, backward stochastic
differential equations

\maketitle

\section{ Introduction }

It is well known that the maximum principle for a stochastic optimal
control problem involves the so-called adjoint processes which solve
the corresponding adjoint equation. In fact, the adjoint equation is
in general a linear backward stochastic differential equation (BSDE)
with a specified a random terminal condition on the state. Unlike a
forward stochastic differential equation, the solution of a BSDE is
a pair of adapted solutions. Thus, in order to obtain the maximum
principle, we need first obtain the existence and uniqueness theorem
for the pair of adapted solutions of adjoint equation.

The linear BSDE was first proposed by Bismut \cite{Bism73} in 1973.
This research field developed fast after the pioneer work of Pardoux
and Peng \cite{PaPe90} in 1990 got the existence and uniqueness
theorem for the solution of nonlinear BSDE driven by Brownian motion
under Lipschitz condition. Now BSDE theory has been playing a key
role not only in dealing with stochastic optimal control problems,
but in mathematical finance, particularly in hedging and nonlinear
pricing theory for imperfect market (see e.g. \cite{KPQ97}).

As for BSDE driven by the non-continuous martingale, Tang and Li
\cite{Tali94} first discussed the existence and uniqueness theorem
of the solution of BSDE driven by Poisson point process and
consequently proved the maximum principle for optimal control of
stochastic systems with random jumps. In 2000, Nualart and
Schoutens \cite{NuSc} got a martingale representation theorem
for a type of L\'{e}vy processes through Teugel's martingales, where Teugel's martingales are a family of
pairwise strongly orthonormal martingales associated with L\'{e}vy
processes. Later, they proved in
\cite{NuSc01} the existence and uniqueness theory of BSDE driven by
Teugel's martingales. The above results are further extended to the
one-dimensional BSDE driven by Teugel's martingales and an
independent multi-dimensional Brownian motion by Bahlali et al
\cite{BEE03}. One can refer to \cite{Otm06, Otm08, ReFa09, ReOt10}
for more results on such kind of BSDEs. 

In the mean time, the stochastic optimal control problems related to
Teugel's martingales were studied. In 2008, a stochastic
linear-quadratic problem with L\'{e}vy processes was considered by
Mitsui and Tabata \cite{MiTa08}, in which they established the
closeness property of multi-dimensional backward stochastic Riccati
differential equation(BSRDE) with Teugel's martingales and proved
the existence and uniqueness of solution to such kind of
one-dimensional BSRDE, moreover, in their paper an application of
BSDE to a financial problem with full and partial observations was
demonstrated. Motivated by \cite{MiTa08}, Meng and Tang
\cite{MeTa08} studied the general stochastic optimal control problem
for the forward stochastic systems driven by Teugel's martingales
and an independent multi-dimensional Brownian motion, of which the
necessary and sufficient optimality conditions in the form of
stochastic maximum principle with the convex control domain are
obtained.

However, \cite{MeTa08} and \cite{MiTa08} are only concerned with the optimal control
problem of the forward controlled stochastic system. Since a BSDE is
a well-defined dynamic system itself and has important applications
in mathematical finance, it is necessary and natural to consider the
optimal control problem of BSDE. Actually, there has been much
literature on BSDE control system driven by Brownian motion (see
e.g. \cite{BGM10, Bah2010, DoZh99, AnZh02, AnZh01}). But to our best
knowledge, there is no discussion on the optimal control problem of
BSDE driven by Teugel martingales and an independent Brownian
motion, which motives us to write this paper.

In this paper, by means of convex variation  methods and duality
techniques, we will give the necessary and sufficient conditions for
the existence of the optimal
control for BSDE system driven by Teugel martingales and an independent multi-dimensional Brownian motion.
As an application, the optimal control for linear backward stochastic
differential equation with a quadratic cost criteria or
called backward linear-quadratic (BLQ) problem is discussed in
details. The optimal
control of BLQ problem will be characterized by stochastic Hamilton systems.
In this case, the stochastic Hamilton system is a linear forward-backward stochastic
differential equation driven by Teugel's martingales and an
independent multi-dimensional Brownian motion, consisting
of the state equation, the adjoint equation and the dual presentation of
the optimal control.

The rest of this paper is organized as follows. In section 2, we introduce useful notation and some existing results on
stochastic differential equations (SDEs) and BSDEs driven by Teugel's martingales. In
section 3, we state the optimal control problem we study, give needed assumptions and prove some
preliminary results on variational equation and variational inequality. In section 4,
we prove the necessary and sufficient optimality conditions for the optimal control problem put forward in section 3.
As an application, the optimal control for BLQ problem is discussed in section 5.

\section{Notation and preliminaries}

Let $(\Omega, \mathscr{F},\{\mathscr{F}_t\}_{0\leq t\leq T}, P)$ be
a complete probability space. The filtration
$\{\mathscr{F}_t\}_{0\leq t\leq T}$ is right-continuous and
generated by a $d$-dimensional standard Brownian motion $\{W(t),
0\leq t\leq T\}$ and a one-dimensional L\'{e}vy process $\{L(t),
0\leq t \leq T\}$. It is known that $L(t)$ has a characteristic
function of the form $$Ee^{i\theta L(t)}=\exp\bigg[ia\theta
t-{1\over2}\sigma^2\theta^2t+t\int_{\mathbb{R}^1}(e^{i\theta
x}-1-i\theta x I_{\{|x|<1\}})v(dx)\bigg],$$ where
$a\in\mathbb{R}^1$, $\sigma>0$ and $v$ is a measure on
$\mathbb{R}^1$ satisfying (i)$\displaystyle \int_0^T(1\wedge
x^2)v(dx)<\infty$ and (ii) there exists $\varepsilon >0$ and
$\lambda >0$, s.t. $\displaystyle \int_{\{-\varepsilon,
\varepsilon\}^c} e^{\lambda |x|}v(dx)<\infty$. These settings imply
that the random variables $L(t)$ have moments
of all orders. 
Denote by $\mathscr{P}$  the
predictable sub-$\sigma$ field of $\mathscr B([0, T])\times
\mathscr{F}$, then we introduce the following notation used
throughout this paper.

$\bullet$~~$H$: a Hilbert space with norm $\|\cdot\|_H$.

$\bullet$~~$\langle\alpha,\beta\rangle:$ the inner product in
$\mathbb{R}^n, \forall \alpha,\beta\in\mathbb{R}^n.$

$\bullet$~~$|\alpha|=\sqrt{\langle\alpha,\alpha\rangle}:$ the norm
of $\mathbb{R}^n,\forall \alpha\in\mathbb{R}^n.$

$\bullet$~~$\langle A,B\rangle=tr(AB^T):$ the inner product in
$\mathbb{R}^{n\times m},\forall A,B\in \mathbb{R}^{n\times m}.$

$\bullet$~~$|A|=\sqrt{tr(AA^T)}:$ the norm of $\mathbb{R}^{n\times
m},\forall A\in \mathbb{R}^{n\times m}$.

$\bullet$~~$l^2$: the space of all real-valued sequences
$x=(x_n)_{n\geq 0}$ satisfying
$$\|x\|_{l^2}\triangleq\sqrt{\displaystyle \sum_{i=1}^\infty x_i^2}<+\infty.$$

$\bullet$~~$l^2(H):$ the space of all H-valued sequence
$f=\{f^i\}_{i\geq 1}$ satisfying
$$\|f\|_{l^2(H)}\triangleq\sqrt{\displaystyle\sum_{i=1}^\infty||f^i||_H^2}<+\infty.$$

$\bullet$~~$l_{\mathscr{F}}^2(0, T, H):$ the space of all
$l^2(H)$-valued and ${\mathscr{F}}_t$-predictable processes
$f=\{f^i(t,\omega),\ (t,\omega)\in[0,T]\times\Omega\}_{i\geq1}$
satisfying
$$\|f\|_{l_{\mathscr{F}}^2(0, T, H)}\triangleq\sqrt{E\displaystyle\int_0^T\sum_{i=1}^\infty||f^i(t)||_H^2dt}<\infty.$$

$\bullet$~~$M_{\mathscr{F}}^2(0,T;H):$ the space of all $H$-valued
and ${\mathscr{F}}_t$-adapted processes $f=\{f(t,\omega),\
(t,\omega)\in[0,T]\times\Omega\}$ satisfying
$$\|f\|_{M_{\mathscr{F}}^2(0,T;H)}\triangleq\sqrt{E\displaystyle\int_0^T\|f(t)\|_H^2dt}<\infty.$$

$\bullet$~~$S_{\mathscr{F}}^2(0,T;H):$ the space of all $H$-valued
and ${\mathscr{F}}_t$-adapted  c\`{a}dl\`{a}g processes
$f=\{f(t,\omega),\ (t,\omega)\in[0,T]\times\Omega\}$ satisfying
$$\|f\|_{S_{\mathscr{F}}^2(0,T;H)}\triangleq\sqrt{E\displaystyle\sup_{0 \leq t \leq T}\|f(t)\|_H^2dt}<+\infty.$$

$\bullet$~~$L^2(\Omega,{\mathscr{F}},P;H):$ the space of all
$H$-valued random variables $\xi$ on $(\Omega,{\mathscr{F}},P)$
satisfying
$$\|\xi\|_{L^2(\Omega,{\mathscr{F}},P;H)}\triangleq E\|\xi\|_H^2<\infty.$$

We denote by $\{H^i(t), 0\leq t \leq T\}_{i=1}^\infty$ the Teugel's
martingales
associated with the L\'{e}vy process $\{L(t),0\leq t \leq T\}$. 
$H^i(t)$ is given by
$$
H^i(t)=c_{i,i}Y^{(i)}(t)+c_{i,i-1}Y^{(i-1)}(t)+\cdots+c_{i,1}Y^{(1)}(t),
$$
where $Y^{(i)}(t)=L^{(i)}(t)-E[L^{(i)}(t)]$ for all $i\geq 1$,
$L^{(i)}(t)$ are so called power-jump processes with
$L^{(1)}(t)=L(t)$, $L^{(i)}(t)=\displaystyle\sum_{0<s\leq t}(\Delta
L(s))^i$ for $i\geq 2$ and the coefficients $c_{ij}$ correspond to
the orthonormalization of polynomials $1,x, x^2,\cdots$ w.r.t. the
measure $\mu(dx)=x^2v(dx)+\sigma^2\delta_0(dx)$. The Teugel's
martingales $\{H^i(t)\}_{i=1}^\infty$ are pathwise strongly
orthogonal 
and their predictable quadratic variation processes are given by
$$\langle H^{(i)}(t), H^{(j)}(t)\rangle=\delta_{ij}t£¬$$
For more details of Teugel's martingales, we invite the
reader to consult Nualart and Schoutens \cite{NuSc, NuSc01}.

In what follows, we will state some basic results on SDE and BSDE
driven by Teugel's martingales
 $\{H^i(t),0\leq t\leq T\}_{i=1}^\infty$ and  the $d$-dimensional Brownian motion
 $\{W(t), 0\leq t\leq T\}.$

Consider SDE:
\begin{equation}\label{eq:1.1}
\begin{array}{ll}
~~~~~~~~~~~~~~~~~~~~X(t)=&a+\displaystyle\int_0^tb(s,X(s))ds+\sum_{i=1}^d\int_0^tg^i(s,X(s))dW^i(s)\\
&+\displaystyle\sum_{i=1}^\infty\int_0^t\sigma^i(s,X(s-))dH^i(s),\ \
t\in [0, T],
\end{array}
\end{equation}
where  $(a,b,g,\sigma)$ are given mappings satisfying the
assumptions below.
\begin{ass}\label{ass:1.1}
Random variable $a$ is ${\mathscr{F}}_0$-measurable and
$(b,g,\sigma)$ are three random mappings

$$
b:[0,T]\times \Omega\times \mathbb{R}^n\longrightarrow \mathbb{R}^n,
$$
$$
g\equiv (g^1,g^2,\cdots,g^d):[0,T]\times \Omega\times
\mathbb{R}^n\longrightarrow \mathbb{R}^{n\times d},
$$
$$
\sigma\equiv{(\sigma^i)}_{i=1}^\infty:[0,T]\times \Omega\times
\mathbb{R}^n\longrightarrow l^2(\mathbb{R}^n)
$$
satisfying\\
(i) $ b,g $ and $\sigma$ are $ {\mathscr{P}}\bigotimes
{\mathscr B}(\mathbb{R}^n)$ measurable 
with $b(\cdot,0)\in M_{\mathscr{F}}^2(0,T;\mathbb{R}^n)$,
$g(\cdot,0)\in M_{\mathscr{F}}^2(0,T;\mathbb{R}^{n\times d})$ and
$\sigma(\cdot,0)\in l_{\mathscr{F}}^2(0,T;
\mathbb{R}^n).$\\
(ii) $b,g$ and $\sigma$ are uniformly Lipschitz  continuous w.r.t.
$x$, i.e. there exists a constant  $C>0$ s.t. for all
$(t,x,\bar{x})\in [0, T]\times \mathbb{R}^n\times \mathbb{R}^n$ and
a.s. $\omega\in\Omega$,
$$
\begin{array}{ll}
|b(t,x)-b(t,\bar{x})|+|g(t,x)-g(t,\bar{x})|+||\sigma(t,x)-\sigma(t,\bar{x})||_{l^2(\mathbb{R}^n)}\leq
C|x-\bar{x}|.
\end{array}
$$
\end{ass}

\begin{lem}[{\bf\cite{TaWu09}, Existence and Uniqueness Theorem of SDE}]\label{lem:1.1}
If coefficients $(a,b,g,\sigma)$ satisfy Assumption \ref{ass:1.1},
then SDE \eqref{eq:1.1} has a unique solution $x(\cdot)\in
S_{\mathscr{F}}^2(0,T;\mathbb{R}^n)$.
\end{lem}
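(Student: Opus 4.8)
The plan is to establish existence and uniqueness via the classical Picard iteration / contraction-mapping argument on the Banach space $S_{\mathscr{F}}^2(0,T;\mathbb{R}^n)$, adapted to accommodate the Teugel's-martingale noise alongside the Brownian component. First I would define the solution map $\Phi$ that sends a process $X(\cdot)\in S_{\mathscr{F}}^2(0,T;\mathbb{R}^n)$ to the process $\Phi(X)(\cdot)$ given by the right-hand side of \eqref{eq:1.1} with $X$ plugged into the coefficients, i.e. $\Phi(X)(t)=a+\int_0^t b(s,X(s))\,ds+\sum_{i=1}^d\int_0^t g^i(s,X(s))\,dW^i(s)+\sum_{i=1}^\infty\int_0^t\sigma^i(s,X(s-))\,dH^i(s)$. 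Using Assumption \ref{ass:1.1}(i)–(ii) and the linear-growth bound that follows from Lipschitz continuity plus $b(\cdot,0),g(\cdot,0),\sigma(\cdot,0)$ being square-integrable, I would check that $\Phi$ maps $S_{\mathscr{F}}^2$ into itself: the key ingredients are the Burkholder–Davis–Gundy inequality for the Brownian integral and for the Teugel's-martingale integral, together with the fact that the predictable quadratic variation of $\sum_i\int_0^\cdot\sigma^i\,dH^i$ is $\int_0^\cdot\sum_i|\sigma^i(s,X(s-))|^2\,ds=\int_0^\cdot\|\sigma(s,X(s-))\|_{l^2(\mathbb{R}^n)}^2\,ds$, owing to the strong orthonormality $\langle H^{(i)},H^{(j)}\rangle_t=\delta_{ij}t$. (Here one uses that $X$ and $X(\cdot-)$ differ only on a countable set, hence agree $ds$-a.e.)

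Next I would prove the contraction estimate. For $X,\bar X\in S_{\mathscr{F}}^2(0,T;\mathbb{R}^n)$, apply BDG and Hölder to $\Phi(X)-\Phi(\bar X)$ to obtain, for every $t\in[0,T]$,
\[
E\sup_{0\le s\le t}|\Phi(X)(s)-\Phi(\bar X)(s)|^2\le C_T\int_0^t E\sup_{0\le r\le s}|X(r)-\bar X(r)|^2\,ds,
\]
where $C_T$ depends only on the Lipschitz constant $C$, on $T$ (bounding the $ds$-integral term via Cauchy–Schwarz), and on the universal BDG constants. Iterating this inequality $m$ times gives that the $m$-fold composite $\Phi^m$ satisfies a bound with constant $(C_T T)^m/m!$, which is strictly less than $1$ for $m$ large; hence $\Phi^m$ is a contraction on the complete metric space $S_{\mathscr{F}}^2(0,T;\mathbb{R}^n)$, and by the standard corollary of the Banach fixed-point theorem $\Phi$ itself has a unique fixed point, which is the unique solution $x(\cdot)$. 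Alternatively one runs Picard iterates $X^{(k+1)}=\Phi(X^{(k)})$ directly and sums the resulting series.

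The main obstacle — really the only non-routine point — is handling the infinite-dimensional jump term $\sum_{i=1}^\infty\int_0^t\sigma^i(s,X(s-))\,dH^i(s)$ rigorously: one must verify that this series of stochastic integrals converges in $M_{\mathscr{F}}^2$ (equivalently, in $S_{\mathscr{F}}^2$ after BDG) and behaves like a single $l^2(\mathbb{R}^n)$-valued martingale integral, so that its quadratic variation is controlled by $\|\sigma\|^2_{l^2(\mathbb{R}^n)}$ and the Lipschitz assumption in the $l^2(\mathbb{R}^n)$-norm does its job. This is precisely the content of the stochastic calculus for Teugel's martingales developed in Nualart–Schoutens \cite{NuSc01} and used in \cite{MeTa08, TaWu09}, so I would invoke it; since the statement is attributed to \cite{TaWu09}, I would remark that the full details are there and that the argument above reproduces them. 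Everything else — the c\`adl\`ag property of the solution (from the c\`adl\`ag property of the stochastic integrals), measurability, and uniqueness — is standard.
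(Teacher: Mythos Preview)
Your proposal is correct and aligns with the paper's treatment: the paper does not give a self-contained proof of this lemma but simply records that Lemmas \ref{lem:1.1}--\ref{lem:1.4} ``follow from an application of It\^{o}'s formula, Gronwall's inequality and Burkholder--Davis--Gundy inequality'' and refers to \cite{TaWu09}, \cite{MeTa08}, \cite{BEE03} for details. Your Picard/contraction argument with BDG control of the Teugel's-martingale term is exactly the standard route those references take, so there is nothing to correct or add.
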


\begin{lem}[{\bf\cite{MeTa08}, Continuous Dependence Theorem of SDE}]  \label{lem:1.2}
Assume coefficients $(a,b,g,\sigma)$ and
$(\bar{a},\bar{b},\bar{g},\bar{\sigma})$ satisfy Assumption
\ref{ass:1.1}. If $x(\cdot)$ and $\bar{x}(\cdot)$ are the solutions
to SDE \eqref{eq:1.1} corresponding to $(a,b,g,\sigma)$ and
$(\bar{a},\bar{b},\bar{g},\bar{\sigma})$, respectively, 
then we have
\begin{equation*}
\begin{array}{ll}
~E\displaystyle\sup_{0\leq t\leq T}|x(t)-\bar{x}(t)|^2\leq
&K\bigg[|a-\bar{a}|^2+E\displaystyle\int_0^T|b(t,\bar{x}(t))-\bar{b}(t,\bar{x}(t))|^2dt\\
&+E\displaystyle\int_0^T|g(t,\bar{x}(t))-\bar{g}(t,\bar{x}(t))|^2dt\\
&+E\displaystyle\int_0^T||\sigma(t,\bar{x}(t))-\bar{\sigma}
(t,\bar{x}(t))||_{l^2(\mathbb{R}^n)}^2dt\bigg],
\end{array}
\end{equation*}
where $K$ is a positive constant depending only on $T$ and the
Lipschitz constant $C$.

In particular, for
$(\bar{a},\bar{b},\bar{g},\bar{\sigma})=(0,0,0,0),$ we have
$$
\begin{array}{ll}
&E\displaystyle\sup_{0\leq t\leq T}|x(t)|^2\\
\leq&K\bigg[|a|^2+E\displaystyle \int_0^T|b(t,0)|^2dt
+E\displaystyle\int_0^T|g(t,0)|^2dt
+E\displaystyle\int_0^T||\sigma(t,0)||_{l^2(\mathbb{R}^n)}^2dt\bigg]<+\infty.
\end{array}
$$
\end{lem}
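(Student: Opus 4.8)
\noindent\emph{Proof strategy.} The plan is to analyse the difference process $e(t):=x(t)-\bar x(t)$. Subtracting the two copies of \eqref{eq:1.1} satisfied by $x(\cdot)$ and $\bar x(\cdot)$, one has
\begin{equation*}
\begin{aligned}
e(t)={}&(a-\bar a)+\int_0^t\big[b(s,x(s))-\bar b(s,\bar x(s))\big]\,ds+\sum_{i=1}^d\int_0^t\big[g^i(s,x(s))-\bar g^i(s,\bar x(s))\big]\,dW^i(s)\\
&+\sum_{i=1}^\infty\int_0^t\big[\sigma^i(s,x(s-))-\bar\sigma^i(s,\bar x(s-))\big]\,dH^i(s),\qquad 0\le t\le T.
\end{aligned}
\end{equation*}
First I would apply It\^o's formula to $|e(t)|^2$. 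Writing $e=e(0)+A+M$ with $A$ the absolutely continuous drift part and $M$ the (local) martingale part, one has $[e]=[M]$; localizing by a sequence of stopping times so that the stochastic integrals against the $W^i$ and the $H^i$ are genuine $L^2$-bounded martingales and passing to the limit --- legitimate because $x,\bar x\in S^2_{\mathscr F}(0,T;\mathbb R^n)$ by Lemma \ref{lem:1.1} and, by Assumption \ref{ass:1.1} together with the Lipschitz bound, the integrands lie in $M^2_{\mathscr F}$ and $l^2_{\mathscr F}$ --- and then taking expectations using the predictable quadratic covariations $\langle W^i,W^j\rangle_t=\delta_{ij}t$, $\langle W^i,H^j\rangle_t=0$ and $\langle H^i,H^j\rangle_t=\delta_{ij}t$, one obtains the energy identity
\begin{equation*}
\begin{aligned}
E|e(t)|^2={}&|a-\bar a|^2+E\int_0^t2\langle e(s),\,b(s,x(s))-\bar b(s,\bar x(s))\rangle\,ds+E\int_0^t|g(s,x(s))-\bar g(s,\bar x(s))|^2\,ds\\
&+E\int_0^t\|\sigma(s,x(s))-\bar\sigma(s,\bar x(s))\|_{l^2(\mathbb R^n)}^2\,ds,
\end{aligned}
\end{equation*}
the left limits being dropped in the $ds$-integrals since a c\`{a}dl\`{a}g path agrees with its left limit off a Lebesgue-null set of times.

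The second step is a Gronwall argument. Using the splitting $b(s,x(s))-\bar b(s,\bar x(s))=\big(b(s,x(s))-b(s,\bar x(s))\big)+\big(b(s,\bar x(s))-\bar b(s,\bar x(s))\big)$, and the analogous ones for $g$ and $\sigma$, and bounding the first summand in each by $C|e(s)|$ via the uniform Lipschitz condition of Assumption \ref{ass:1.1}(ii), then applying $2\langle e,\beta\rangle\le|e|^2+|\beta|^2$ and $(p+q)^2\le2p^2+2q^2$ to the energy identity, one arrives at
\begin{equation*}
E|e(t)|^2\le K|a-\bar a|^2+K\,\mathcal R+K\int_0^tE|e(s)|^2\,ds,
\end{equation*}
where $K$ depends only on $C$ and $T$ and $\mathcal R$ denotes the sum of the three expectation-of-integral terms appearing on the right-hand side of the claimed estimate (finite, since $\bar x\in S^2_{\mathscr F}$ and $b(\cdot,0),g(\cdot,0),\sigma(\cdot,0)$ and their barred counterparts lie in $M^2_{\mathscr F},l^2_{\mathscr F}$ by Assumption \ref{ass:1.1}(i)). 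Gronwall's inequality then gives $\sup_{0\le t\le T}E|e(t)|^2\le K(|a-\bar a|^2+\mathcal R)$.

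The third step upgrades this to the estimate with the supremum inside. Returning to the equation for $e(t)$, taking $\sup_{0\le t\le T}$ and using $|u_1+u_2+u_3+u_4|^2\le4\sum_k|u_k|^2$: the drift term is controlled by Cauchy--Schwarz, $\sup_t\big|\int_0^t(\cdot)\,ds\big|^2\le T\int_0^T|\cdot|^2\,ds$, while the $W$-integral and the $H$-integral are handled by the Burkholder--Davis--Gundy inequality together with $\langle W^i,W^j\rangle_t=\delta_{ij}t$ and $\langle H^i,H^j\rangle_t=\delta_{ij}t$, which bound their running-supremum second moments by constant multiples of $E\int_0^T|g(s,x(s))-\bar g(s,\bar x(s))|^2\,ds$ and $E\int_0^T\|\sigma(s,x(s))-\bar\sigma(s,\bar x(s))\|_{l^2(\mathbb R^n)}^2\,ds$ respectively. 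Applying the Lipschitz splitting once more and then inserting the second-step bound $E\int_0^T|e(s)|^2\,ds\le T\sup_sE|e(s)|^2\le K(|a-\bar a|^2+\mathcal R)$ yields $E\sup_{0\le t\le T}|e(t)|^2\le K(|a-\bar a|^2+\mathcal R)$, which is the assertion. The particular case $(\bar a,\bar b,\bar g,\bar\sigma)=(0,0,0,0)$ then follows at once, $\bar x\equiv0$ being the unique solution in that case and the right-hand side being finite by Assumption \ref{ass:1.1}(i).

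I expect the main obstacle to be the rigorous justification of It\^o's formula and of the vanishing of the martingale terms under expectation in this infinite-dimensional jump-diffusion setting: one must exhibit the localizing stopping times, verify that the stopped integrals against $W$ and against the countably many $H^i$ are true $L^2$-martingales, and pass to the limit, all of which rests on the a priori $S^2$-estimate of Lemma \ref{lem:1.1} and on controlling $E\int_0^T\sum_{i=1}^\infty|\sigma^i(s,x(s-))-\bar\sigma^i(s,\bar x(s-))|^2\,ds$ through the Lipschitz bound. The rest is a routine assembly of Young's inequality, the Burkholder--Davis--Gundy inequality and Gronwall's lemma.
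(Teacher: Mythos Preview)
Your proposal is correct and matches the paper's approach. The paper does not actually prove Lemma~\ref{lem:1.2} in detail: it is quoted from \cite{MeTa08}, and the only justification given in the text is the sentence following Lemma~\ref{lem:1.4}, namely that Lemmas~\ref{lem:1.1}--\ref{lem:1.4} ``follow from an application of It\^{o}'s formula, Gronwall's inequality and Burkholder--Davis--Gundy inequality.'' Your argument is precisely a fleshed-out version of that sketch --- It\^{o}'s formula for $|e(t)|^2$, a Lipschitz splitting plus Young's inequality feeding into Gronwall, and then BDG to pass from $\sup_tE|e(t)|^2$ to $E\sup_t|e(t)|^2$ --- so there is no substantive difference to report.
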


Now we consider BSDE:
\begin{eqnarray}\label{eq:13}
\begin{split}
y(t)
=&\xi+\displaystyle\int_t^Tf(s,y{(s)},q(s),z(s))ds-\displaystyle\sum_{i=1}^d\int_t^Tq^i(s)dW^i(s)
\\&-\displaystyle\sum_{i=1}^\infty\int_t^T z^i(s)dH^i(s),\ \
t\in [0, T],
\end{split}
\end{eqnarray}
where coefficients $(\xi,f)$ are given mappings satisfying the
assumptions below.
\begin{ass}\label{ass:1.3}
The terminal value $\xi\in L^2(\Omega,{\mathscr{F}}_T,P;
\mathbb{R}^n)$ and $f$ is a random mapping  $$f:[0,T]\times
\Omega\times \mathbb{R}^n\times \mathbb{R}^{n\times
d}\times l^2(\mathbb{R}^n)\longrightarrow \mathbb{R}^n$$ satisfying\\
(i) $f$ is ${\mathscr{P}}\bigotimes {\mathscr
B}(\mathbb{R}^n)\bigotimes {\mathscr B}(\mathbb{R}^{n\times
d})\bigotimes {\mathscr B}(l^2(\mathbb{R}^n))$ measurable with
$f(\cdot,0,0,0)\in
M_{\mathscr{F}}^2(0,T;\mathbb{R}^n)$.\\
(ii) $f$ is uniformly Lipschitz continuous w.r.t. $(y,q,z)$, i.e.
there exists a constant $C>0$ s.t. for all
$(t,y,q,z,\bar{y},\bar{q}, \bar{z})\in [0, T]\times
\mathbb{R}^n\times \mathbb{R}^{n\times d}\times
l^2(\mathbb{R}^n)\times \mathbb{R}^n\times\mathbb{R}^{n\times
d}\times l^2(\mathbb{R}^n)$ and a.s. $\omega\in\Omega$,
$$
\begin{array}{ll}
&|f(t,y,q,z)-f(t,\bar{y},\bar{q},\bar{z})|\leq
C\bigg[|y-\bar{y}|+|q-\bar{q}|+\|z-\bar{z}\|_{l^2({\mathbb{R}^n})}\bigg].
\end{array}
$$
\end{ass}

\begin{lem}[{\bf \cite{BEE03}, Existence and Uniqueness of BSDE}]\label{lem:1.3}
If coefficients $(\xi, f)$ satisfy Assumption \ref{ass:1.3}, then
BSDE \eqref{eq:13} has a unique solution $$(y(\cdot), q(\cdot),
z(\cdot))\in S_{\mathscr{F}}^2(0,T;\mathbb{R}^n)\times
M_{\mathscr{F}}^2(0,T; \mathbb{R}^{n\times d}) \times
l_{\mathscr{F}}^2(0,T;\mathbb{R}^n).$$
\end{lem}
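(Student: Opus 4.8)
The plan is to realize $(y,q,z)$ as the unique fixed point of a contraction map on a Banach space and then upgrade the regularity of the first component. The natural space is $\mathcal{B}:=M_{\mathscr{F}}^2(0,T;\mathbb{R}^n)\times M_{\mathscr{F}}^2(0,T;\mathbb{R}^{n\times d})\times l_{\mathscr{F}}^2(0,T;\mathbb{R}^n)$. The first step is the \emph{affine case}: for any given $h\in M_{\mathscr{F}}^2(0,T;\mathbb{R}^n)$, the BSDE \eqref{eq:13} with driver $f(s)\equiv h(s)$ (not depending on $(y,q,z)$) has a unique solution. Indeed $N(t):=E\big[\xi+\int_0^T h(s)\,ds\,\big|\,\mathscr{F}_t\big]$ is a square-integrable c\`adl\`ag martingale, so by the martingale representation theorem for Teugel's martingales and Brownian motion (Nualart--Schoutens \cite{NuSc}, using the independence of $W$ and $L$) there are $q\in M_{\mathscr{F}}^2(0,T;\mathbb{R}^{n\times d})$ and $z\in l_{\mathscr{F}}^2(0,T;\mathbb{R}^n)$ with $N(t)=N(0)+\sum_{i=1}^d\int_0^t q^i(s)\,dW^i(s)+\sum_{i=1}^\infty\int_0^t z^i(s)\,dH^i(s)$; then $y(t):=N(t)-\int_0^t h(s)\,ds$ solves \eqref{eq:13}, and uniqueness is immediate by comparing two solutions.

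Second, I would define $\Gamma:\mathcal{B}\to\mathcal{B}$ by $\Gamma(Y,Q,Z)=(y,q,z)$, where $(y,q,z)$ solves the affine BSDE with driver $h(s):=f(s,Y(s),Q(s),Z(s))$; Assumption \ref{ass:1.3}(i)--(ii) gives $h\in M_{\mathscr{F}}^2(0,T;\mathbb{R}^n)$, so $\Gamma$ is well defined by the first step. To show $\Gamma$ contracts, take two inputs, write the difference equation for $(\delta y,\delta q,\delta z)$, and apply It\^o's formula to $e^{\beta t}|\delta y(t)|^2$. The key structural facts are that the Teugel's martingales are pairwise strongly orthonormal with $\langle H^i,H^j\rangle_t=\delta_{ij}t$ and are orthogonal to $W$, so the bracket terms collapse to $E\int_0^T e^{\beta s}\big(|\delta q(s)|^2+\|\delta z(s)\|_{l^2(\mathbb{R}^n)}^2\big)\,ds$, while the jump part of It\^o's formula contributes a further nonnegative term that may simply be discarded. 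Combining this with the Lipschitz bound on $f$ and Young's inequality, and choosing $\beta$ large, yields a strict contraction of $\Gamma$ in the equivalent norm $\|(u,v,w)\|_\beta^2:=E\int_0^T e^{\beta s}\big(|u|^2+|v|^2+\|w\|_{l^2(\mathbb{R}^n)}^2\big)\,ds$. (One may instead run Picard iteration on a short interval and patch, but the weighted norm avoids this.)

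The Banach fixed point theorem then gives a unique $(y,q,z)\in\mathcal{B}$ solving \eqref{eq:13}; applying the Burkholder--Davis--Gundy inequality to the stochastic integrals in the equation shows $E\sup_{0\le t\le T}|y(t)|^2<\infty$, i.e.\ $y\in S_{\mathscr{F}}^2(0,T;\mathbb{R}^n)$, and the c\`adl\`ag property is inherited from the integrals against the $H^i$. I expect the main obstacle to be the first step: one must justify that the representation property genuinely holds on the \emph{combined} filtration generated by $W$ and $L$, and control the convergence of the infinite series $\sum_i\int z^i\,dH^i$ in $M^2$, including the isometry identifying its quadratic variation with $E\int_0^T\|z(s)\|_{l^2(\mathbb{R}^n)}^2\,ds$. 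Once that representation and isometry are available, the remaining estimates are routine applications of It\^o's formula, Gronwall's inequality and BDG.
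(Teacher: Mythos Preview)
Your proposal is correct and follows the standard Pardoux--Peng style argument (martingale representation for the affine driver, contraction in a $\beta$-weighted $M^2$ norm via It\^o's formula on $e^{\beta t}|\delta y(t)|^2$, then BDG to upgrade to $S^2_{\mathscr F}$). The paper does not actually give its own proof of this lemma: it is quoted from \cite{BEE03}, with only the one-line remark that Lemmas \ref{lem:1.1}--\ref{lem:1.4} ``follow from an application of It\^o's formula, Gronwall's inequality and Burkholder--Davis--Gundy inequality,'' which is precisely the toolkit you invoke.
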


\begin{lem}[{\bf \cite{BEE03}, Continuous Dependence Theorem of
BSDE}]\label{lem:1.4} Assume that coefficients $(\xi,f)$ and
$(\bar{\xi},\bar{f})$ satisfy Assumption \ref{ass:1.3}. If
$(y(\cdot),q(\cdot),z(\cdot))$ and
$(\bar{y}(\cdot),\bar{q}(\cdot),\bar{z}(\cdot))$ are the solutions
to BSDE \eqref{eq:13} corresponding to $(\xi,f)$ and
$(\bar{\xi},\bar{f})$, respectively, then we have
\begin{equation*}
\begin{array}{ll}
~&E\displaystyle\sup_{0\leq t\leq T}|y(t)-\bar{y}(t)|^2
+E\int_0^T|q(t)-\bar{q}(t)|^2dt+E\int_0^T||z(t)
-\bar{z}(t)||^2_{l^2({\mathbb{R}^n})}dt
\\\leq&K\bigg[E|\xi-\bar{\xi}|^2
+E\displaystyle\int_0^T|f(t,\bar{y}{(t)},\bar{q}(t),\bar{z}(t))
-\bar{f}(t,\bar{y}{(t)},\bar{q}(t),\bar{z}(t))|^2dt\bigg],
\end{array}
\end{equation*}
where $K$ is a positive constant depending only on $T$ and the
Lipschitz constant $C$.

In particular, if $(\bar{\xi},\bar{f})=(0,0)$, we have
\begin{equation}\label{eq:1.5}
\begin{array}{ll}
&E\displaystyle\sup_{0\leq t\leq T}|y(t)|^2
+E\int_0^T|q(t)|^2dt+E\int_0^T||z(t)||^2_{l^2({\mathbb{R}^n})}dt
\\\leq&K\bigg[E|\xi|^2
+E\displaystyle\int_0^T|f(t,0,0,0) |^2dt\bigg].
\end{array}
\end{equation}
\end{lem}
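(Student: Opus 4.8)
The plan is to reduce the comparison of the two solutions to an \emph{a priori} estimate for a single BSDE and then combine It\^o's formula with Gronwall's inequality and the Burkholder--Davis--Gundy (BDG) inequality. Write $\hat\xi=\xi-\bar\xi$, $\hat y=y-\bar y$, $\hat q=q-\bar q$, $\hat z=z-\bar z$. Subtracting the two copies of \eqref{eq:13}, the triple $(\hat y,\hat q,\hat z)$ solves a BSDE with terminal value $\hat\xi$ and generator $F(s)=f(s,y(s),q(s),z(s))-\bar f(s,\bar y(s),\bar q(s),\bar z(s))$. Split $F(s)=\big[f(s,y(s),q(s),z(s))-f(s,\bar y(s),\bar q(s),\bar z(s))\big]+\rho(s)$, where $\rho(s):=f(s,\bar y(s),\bar q(s),\bar z(s))-\bar f(s,\bar y(s),\bar q(s),\bar z(s))$ is the ``data'' term whose $L^2$-norm appears on the right-hand side of the claimed inequality; by Assumption \ref{ass:1.3}(ii) the bracketed term is bounded in norm by $C(|\hat y(s)|+|\hat q(s)|+\|\hat z(s)\|_{l^2(\mathbb{R}^n)})$.

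Next I would apply It\^o's formula to $s\mapsto|\hat y(s)|^2$ on $[t,T]$. Since $W$ and $L$ are independent and the Teugel martingales satisfy $\langle H^i,H^j\rangle_s=\delta_{ij}s$ with pairwise strong orthogonality, taking expectations kills all stochastic integrals and produces
$$E|\hat y(t)|^2+E\int_t^T|\hat q(s)|^2\,ds+E\int_t^T\|\hat z(s)\|_{l^2(\mathbb{R}^n)}^2\,ds=E|\hat\xi|^2+2E\int_t^T\langle\hat y(s),F(s)\rangle\,ds .$$
Then $2\langle\hat y,F\rangle\le 2C|\hat y|\big(|\hat y|+|\hat q|+\|\hat z\|_{l^2(\mathbb{R}^n)}\big)+2|\hat y|\,|\rho|$, and Young's inequality bounds the right-hand side by $\tfrac12|\hat q|^2+\tfrac12\|\hat z\|_{l^2(\mathbb{R}^n)}^2+\beta|\hat y|^2+|\rho|^2$ for a constant $\beta=\beta(C)$; the first two terms are absorbed into the left side, leaving
$$E|\hat y(t)|^2+\tfrac12E\int_t^T\big(|\hat q(s)|^2+\|\hat z(s)\|_{l^2(\mathbb{R}^n)}^2\big)ds\le E|\hat\xi|^2+E\int_0^T|\rho(s)|^2ds+\beta E\int_t^T|\hat y(s)|^2ds .$$
Gronwall's inequality (backward in $t$) then gives $\sup_{0\le t\le T}E|\hat y(t)|^2\le e^{\beta T}\big(E|\hat\xi|^2+E\int_0^T|\rho|^2ds\big)$, and reinserting this bound into the last displayed inequality controls $E\int_0^T(|\hat q|^2+\|\hat z\|_{l^2(\mathbb{R}^n)}^2)ds$ by the same right-hand side up to a constant.

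Finally, to upgrade $\sup_t E|\hat y(t)|^2$ to $E\sup_t|\hat y(t)|^2$ I would return to the It\^o identity \emph{before} taking expectations, move the generator term to the other side, take $\sup_{0\le t\le T}$ and then expectation, and estimate the two martingale suprema $E\sup_t\big|\sum_i\int_t^T\langle\hat y(s),\hat q^i(s)dW^i(s)\rangle\big|$ and $E\sup_t\big|\sum_i\int_t^T\langle\hat y(s-),\hat z^i(s)dH^i(s)\rangle\big|$ via BDG; after one more Young step each is $\le \tfrac14 E\sup_t|\hat y(t)|^2+C\,E\int_0^T(|\hat q|^2+\|\hat z\|_{l^2(\mathbb{R}^n)}^2)ds$, so the $\sup$ term is absorbed and the $L^2$ bounds on $\hat q,\hat z$ already obtained finish the proof; the specialization $(\bar\xi,\bar f)=(0,0)$ is then immediate. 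I expect the main technical point to be the rigorous handling of the countable family of Teugel-martingale integrals: one must justify It\^o's formula and the identity $E\big[\sum_i\int_0^T z^i dH^i\big]_T=E\int_0^T\|z(s)\|_{l^2(\mathbb{R}^n)}^2ds$ term by term using strong orthogonality and the fact that membership in $l_{\mathscr{F}}^2(0,T;\mathbb{R}^n)$ makes the series of stochastic integrals converge in $M_{\mathscr{F}}^2(0,T;\mathbb{R}^n)$, and likewise to invoke the $l^2$-valued form of BDG for the jump martingale part; once these structural facts are in place the remaining Young/Gronwall bookkeeping is routine.
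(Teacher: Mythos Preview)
Your proposal is correct and follows exactly the approach the paper indicates: the paper does not spell out a proof of this lemma but simply remarks that Lemmas \ref{lem:1.1}--\ref{lem:1.4} ``follow from an application of It\^{o}'s formula, Gronwall's inequality and Burkholder--Davis--Gundy inequality'' and refers to \cite{BEE03}, \cite{MeTa08}, \cite{TaWu09} for details. Your write-up is precisely a fleshed-out version of that sketch, including the appropriate care with the $l^2$-series of Teugel-martingale integrals.
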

In view of Assumptions \ref{ass:1.1}-\ref{ass:1.3}, Lemmas
\ref{lem:1.1}-\ref{lem:1.4} follow from an application of It\^{o}'s
formula, Gronwall's inequality and Burkholder-Davis-Gundy
inequality. One can refer to \cite{BEE03}, \cite{MeTa08} and
\cite{TaWu09} for details.

\section{Formulation of the problem and preliminary lemmas}

Let the admissible control set $U$ be a nonempty convex subset of
$\mathbb{R}^m$. An admissible control process $u(\cdot)$ is defined
as a ${\mathscr{F}}_t$-predictable process with values in $U$ s.t.
$E\displaystyle\int_0^T|u(t)|^2dt<+\infty$. We denote by ${\cal A}$
the set including all admissible control processes.

For any given  admissible control $u(\cdot)\in{\cal A}$, we consider
the following controlled nonlinear BSDE driven by multi-dimensional
Brownian motion $W$ and Teugel's martingales $\{H^i\}_{i=1}^\infty$:
\begin{eqnarray}\label{eq:1.3}
\begin{split}
y(t)=&\xi+\displaystyle\int_t^Tf(s,y{(s)},q(s),z(s),u(s))ds\\&
-\displaystyle\sum_{i=1}^d\int_t^Tq^i(s)dW^i(s)
-\displaystyle\sum_{i=1}^\infty\int_t^T z^i(s)dH^i(s),\ \ t\in[0,T]
\end{split}
\end{eqnarray}
with the cost functional
\begin{equation}\label{eq:2.2}
J(u(\cdot))=E\displaystyle\bigg[\int_0^Tl(t,y(t),q(t),z(t),u(t))dt+\Phi(y(0))\bigg],
\end{equation}
where  $$\xi: \Omega\longrightarrow \mathbb{R}^n,$$
$$f:[0,T]\times \Omega\times
\mathbb{R}^n\times \mathbb{R}^{n\times d}\times
l^2(\mathbb{R}^n)\times U\longrightarrow \mathbb{R}^n,$$
$$l:[0,T]\times \Omega\times
\mathbb{R}^n\times \mathbb{R}^{n\times d}\times
l^2(\mathbb{R}^n)\times U\longrightarrow \mathbb{R}^1$$ and
$$\phi:\Omega\times \mathbb{R}^{n}\longrightarrow \mathbb{R}^1$$
are given coefficients.

Throughout this paper, we introduce the following basic assumptions
on coefficients $(\xi, f, l, \phi)$.
\begin{ass}\label{ass:2.1}  The terminal value $\xi\in L^2(\Omega,{\mathscr{F}}_T,P;
\mathbb{R}^n)$ and the random mapping $f$ is
${\mathscr{P}}\bigotimes {\mathscr B}(\mathbb{R}^n)\bigotimes
{\mathscr B} (\mathbb{R}^{n\times d})\bigotimes {\mathscr
B}(l^2(\mathbb{R}^n))\bigotimes {\mathscr B}(U)$ measurable with
$f(\cdot,0,0,0,0)\in M^2(0,T;\mathbb{R}^n)$. For almost all $(t,
\omega)\in [0, T]\times \Omega$, $f(t,\omega, y,p,z,u)$ is
Fr\'{e}chet differentiable w.r.t. $(y,p,z,u)$ and the corresponding
Fr\'{e}chet  derivatives $f_y, f_p, f_z, f_u$ are continuous and
uniformly bounded.
\end{ass}
\begin{ass}\label{ass:2.2}
The random mapping $l$ is ${\mathscr{P}}\bigotimes {\mathscr
B}(\mathbb{R}^n)\bigotimes {\mathscr B} (\mathbb{R}^{n\times
d})\bigotimes {\mathscr B}(l^2(\mathbb{R}^n))\bigotimes {\mathscr
B}(U)$ measurable and for almost all $(t, \omega)\in [0, T]\times
\Omega$, $l$ is Fr\'{e}chet differentiable w.r.t. $(y,p,z,u)$ with
continuous Fr\'{e}chet derivatives $l_y, l_q, l_z, l_u$. The random
mapping $\phi$ is ${\mathscr{F}}_T \bigotimes {\mathscr
B}(\mathbb{R}^n)$ measurable and for almost all $(t,\omega)\in [0,
T]\times\Omega$, $\phi$ is Fr\'{e}chet differentiable w.r.t. $y$
with continuous Fr\'{e}chet derivative $\phi_y$. Moreover, for
almost all $(t,\omega)\in [0, T]\times \Omega$, there exists a
constant $C$ s.t. for all $(p,q,z,u)\in
\mathbb{R}^n\times\mathbb{R}^{n\times d}\times
l^2(\mathbb{R}^n)\times U$,
$$|l|\leq C(1+|y|^2+|q|^2+|z|^2+|u|^2),\ \ |\phi|\leq C(1+|y|^2),$$
$$|l_y|+|l_q|+|l_z|+|l_u|\leq C(1+|y|+|q|+|z|+|u|)\ and\ |\phi_y|\leq
C(1+|y|).$$
\end{ass}
Under Assumption \ref{ass:2.1}, we can get from Lemma \ref{lem:1.3}
that for each  $u(\cdot)\in {\cal A}$, the system \eqref{eq:1.3}
admits a unique strong solution. We denote the strong solution of
\eqref{eq:1.3} by $(y^u(\cdot), q^u(\cdot), z^u(\cdot))$, or
$(y(\cdot), q(\cdot), z(\cdot))$ if its dependence on admissible
control $u(\cdot)$ is clear from context. Then we call $(y(\cdot),
q(\cdot), z(\cdot))$ the state processes corresponding to the
control process $u(\cdot)$ and call $(u(\cdot); y(\cdot), q(\cdot),
z(\cdot))$ the admissible pair. Furthermore, by Assumption
\ref{ass:2.2} and a priori estimate \eqref{eq:1.5}, it is easy to
check that
$$ |J(u(\cdot))|<\infty.$$

Then we put forward the optimal control problem we study.
\begin{pro}\label{pro:2.1}
Find an admissible control $\bar{u}(\cdot)$ such that
\begin{equation*}\label{eq:b7}
J(\bar{u}(\cdot))=\displaystyle\inf_{u(\cdot)\in {\cal
A}}J(u(\cdot)).
\end{equation*}
\end{pro}
Any  $\bar{u}(\cdot)\in {\cal A}$ satisfying above is called an
optimal control process of Problem \ref{pro:2.1} and the
corresponding state processes $(\bar{y}(\cdot), \bar {q}(\cdot),
\bar{z}(\cdot))$ are called the optimal state processes.
Correspondingly $(\bar{u}(\cdot); \bar{y}(\cdot), \bar {q}(\cdot),
\bar{z}(\cdot))$ is called an optimal pair of Problem
\ref{pro:2.1}.\\

Before we deduce the necessary and sufficient conditions for the
optimal control of Problem \ref{pro:2.1}, we need do some
preparations. Since the control domain $U$ is convex, the classical
method to get necessary conditions for optimal control processes is
the so-called convex perturbation method. More precisely, assuming
that $(\bar{u}(\cdot);\bar{y}(\cdot), \bar {q}(\cdot),
\bar{z}(\cdot))$ is an optimal pair of Problem \ref{pro:2.1}, for
any given admissible control ${u}(\cdot)$, we define an admissible
control in the form of convex variation
\begin{equation*}
u^\varepsilon(\cdot)=\bar{u}(\cdot)+\varepsilon(u(\cdot)-\bar{u}(\cdot)),
\end{equation*}
where $\varepsilon>0$ can be chosen sufficiently small. Denoting by
$(y^\varepsilon(\cdot), q^\varepsilon(\cdot), z^\varepsilon(\cdot))$
the state processes of the control system \eqref{eq:1.3}
corresponding to the control process $u^\varepsilon(\cdot)$, we
obtain the variational inequality
\begin{equation*}
J(u^\varepsilon(\cdot))- J(\bar{u}(\cdot))\geq 0.
\end{equation*}

In what follows, we do some estimates on the optimal pair and the
convex variable pair.
\begin{lem} \label{lem:3.2}
Under Assumptions \ref{ass:2.1}-\ref{ass:2.2}, we have
\begin{eqnarray*}
\begin{split}
E\sup_{0\leq t\leq
T}|y^\varepsilon(t)-\bar{y}(t)|^2+E\int_0^T|q^\varepsilon(t)-\bar{q}(t)|^2dt+E\int_0^T||z^\varepsilon(t)
-\bar{z}(t)||^2_{l^2({\mathbb{R}^n})}dt=O(\varepsilon^2).
\end{split}
\end{eqnarray*}
\end{lem}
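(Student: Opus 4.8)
The plan is to apply the Continuous Dependence Theorem for BSDEs (Lemma~\ref{lem:1.4}) to the two solutions $(y^\varepsilon,q^\varepsilon,z^\varepsilon)$ and $(\bar y,\bar q,\bar z)$, which solve \eqref{eq:1.3} with the \emph{same} terminal value $\xi$ but with the driver evaluated along $u^\varepsilon(\cdot)$ and $\bar u(\cdot)$ respectively. Writing $f^\varepsilon(s,y,q,z)=f(s,y,q,z,u^\varepsilon(s))$ and $\bar f(s,y,q,z)=f(s,y,q,z,\bar u(s))$, estimate \eqref{eq:1.5} (applied to the pair, i.e. the first displayed inequality in Lemma~\ref{lem:1.4}) gives
\begin{equation*}
\begin{array}{ll}
&E\displaystyle\sup_{0\leq t\leq T}|y^\varepsilon(t)-\bar y(t)|^2
+E\int_0^T|q^\varepsilon(t)-\bar q(t)|^2dt
+E\int_0^T\|z^\varepsilon(t)-\bar z(t)\|^2_{l^2(\mathbb{R}^n)}dt\\[2mm]
\leq& K\,E\displaystyle\int_0^T\big|f(t,\bar y(t),\bar q(t),\bar z(t),u^\varepsilon(t))
-f(t,\bar y(t),\bar q(t),\bar z(t),\bar u(t))\big|^2dt.
\end{array}
\end{equation*}
So the whole problem reduces to bounding the right-hand side by $O(\varepsilon^2)$.

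For that bound I would use Assumption~\ref{ass:2.1}: $f$ is Fr\'echet differentiable in $(y,q,z,u)$ with $f_u$ continuous and \emph{uniformly bounded}. By the mean value theorem applied in the $u$-variable (or simply by the Lipschitz property in $u$ that uniform boundedness of $f_u$ entails), for a.e. $(t,\omega)$,
\begin{equation*}
\big|f(t,\bar y(t),\bar q(t),\bar z(t),u^\varepsilon(t))
-f(t,\bar y(t),\bar q(t),\bar z(t),\bar u(t))\big|
\leq C\,|u^\varepsilon(t)-\bar u(t)|
= C\varepsilon\,|u(t)-\bar u(t)|,
\end{equation*}
using $u^\varepsilon(t)-\bar u(t)=\varepsilon(u(t)-\bar u(t))$. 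Squaring, integrating and taking expectations yields a bound
$C^2\varepsilon^2\,E\int_0^T|u(t)-\bar u(t)|^2dt$, and since $u(\cdot),\bar u(\cdot)\in\mathcal{A}$ are admissible the integral $E\int_0^T|u(t)-\bar u(t)|^2dt$ is finite. Combining with the previous display gives the claimed $O(\varepsilon^2)$ estimate, with the implied constant $K\,C^2\,E\int_0^T|u(t)-\bar u(t)|^2dt$ independent of $\varepsilon$.

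The only genuinely delicate point is checking the hypotheses of Lemma~\ref{lem:1.4} are met, i.e.\ that both drivers $f^\varepsilon$ and $\bar f$ satisfy Assumption~\ref{ass:1.3}: the Lipschitz continuity in $(y,q,z)$ is inherited from the uniform boundedness of $f_y,f_q,f_z$ in Assumption~\ref{ass:2.1}, and the integrability $f^\varepsilon(\cdot,0,0,0)\in M^2_{\mathscr{F}}(0,T;\mathbb{R}^n)$ follows because $f(\cdot,0,0,0,0)\in M^2(0,T;\mathbb{R}^n)$ together with $|f(t,0,0,0,u^\varepsilon(t))-f(t,0,0,0,0)|\leq C|u^\varepsilon(t)|$ and $u^\varepsilon(\cdot)\in\mathcal{A}$. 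Once this bookkeeping is in place the estimate is immediate; there is no real obstacle beyond verifying these regularity conditions and keeping track of the constants.
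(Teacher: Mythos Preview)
Your proof is correct and follows essentially the same approach as the paper: apply the continuous dependence theorem for BSDEs (Lemma~\ref{lem:1.4}) with the same terminal value and drivers differing only in the control, then use the uniform boundedness of $f_u$ to bound the driver difference by $C\varepsilon|u(t)-\bar u(t)|$. Your additional verification that the drivers $f^\varepsilon,\bar f$ satisfy Assumption~\ref{ass:1.3} is a detail the paper leaves implicit.
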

\begin{proof} By continuous dependence theorem of
BSDE (Lemma \ref{lem:1.4}) and the uniformly bounded property of
Fr\'{e}chet derivative $f_u$, we have
\begin{eqnarray*}
\begin{split}
&E\sup_{0\leq t\leq
T}|y^\varepsilon(t)-\bar{y}(t)|^2+E\int_0^T|q^\varepsilon(t)-\bar{q}(t)|^2dt+E\int_0^T||z^\varepsilon(t)
-\bar{z}(t)||^2_{l^2({\mathbb{R}^n})}dt
\\\leq&K E\displaystyle\int_0^T|f(t, \bar{y}(t), \bar{q}(t),
\bar{z}(t),  {u}^\varepsilon(t)) -f(t, \bar{y}(t), \bar{q}(t),
\bar{z}(t),  \bar{u}(t))\big|^2dt
\\\leq&
KE\displaystyle\int_0^T|u^\varepsilon(t)-\bar{u}(t)|^2dt
\\=&KE\displaystyle \int_0^T|(\bar{u}(t)+\varepsilon(u(t)-\bar{u}(t))-\bar{u}(t))|^2dt\\
=&K\varepsilon^2E\displaystyle\int_0^T|u(t)-\bar{u}(t)|^2dt=O(\varepsilon^2).
\end{split}
\end{eqnarray*}
Here and in the rest of this paper, $K$ is a generic positive
constant and might change from line to line.
\end{proof}
\vspace{1mm}

Then we consider the following linear BSDE served as a variational
equation:
\begin{numcases}{}\label{eq:32}
dY_t=-\bigg[f_y(t,\bar{y}(t),\bar{q}(t),\bar{z}(t),\bar{u}_t)
Y(t)+f_q(t,\bar{y}(t),\bar{q}(t),\bar{z}(t),\bar{u}_t)Q_t\nonumber\\
~~~~~~~~~~~~+f_z(t,\bar{y}(t),\bar{q}(t),\bar{z}(t),\bar{u}_t)Z(t)
+f_u(t,\bar{y}(t),\bar{q}(t),\bar{z}(t),\bar{u}_t)(u(t)-\bar{u}(t))\bigg]dt
\nonumber\\~~~~~~~~\displaystyle\sum_{i=1}^d\int_t^TQ^i(s)dW^i(s)+
\displaystyle\sum_{i=1}^\infty Z^i(t)dH^i(t)\\Y(T)=0.\nonumber
\end{numcases}
Under Assumption \ref{ass:2.1}, by Lemma \ref{lem:1.3} we know that
BSDE (\ref{eq:32}) has a unique solution  $$(Y, Q, Z)\in
S_{\mathscr{F}}^2(0,T;\mathbb{R}^n)\times M_{\mathscr{F}}^2(0,T;
\mathbb{R}^{n\times d}) \times
l_{\mathscr{F}}^2(0,T;\mathbb{R}^n).$$
\begin{lem}\label{eq:3.2}
Under Assumptions \ref{ass:2.1}-\ref{ass:2.2}, it follows that
\begin{eqnarray*}
\begin{split}
&E\displaystyle \sup_{0\leq t\leq
T}|y^\varepsilon(t)-\bar{y}(t)-\varepsilon
Y(t)|^2+E\int_0^T|q^\varepsilon(t)-\bar{q}(t)-\varepsilon Q(t)|^2dt
\\&+E\int_0^T||z^\varepsilon(t) -\bar{z}(t)-\varepsilon
Z(t)||^2_{l^2({\mathbb{R}^n})}dt=o(\varepsilon^2).
\end{split}
\end{eqnarray*}
\end{lem}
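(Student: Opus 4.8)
The plan is to estimate the ``second-order remainder'' processes
\[
\tilde y^\varepsilon(t)=\frac{y^\varepsilon(t)-\bar y(t)}{\varepsilon}-Y(t),\qquad
\tilde q^\varepsilon(t)=\frac{q^\varepsilon(t)-\bar q(t)}{\varepsilon}-Q(t),\qquad
\tilde z^\varepsilon(t)=\frac{z^\varepsilon(t)-\bar z(t)}{\varepsilon}-Z(t),
\]
and to show their natural norm is $o(1)$, which is exactly the assertion after multiplying through by $\varepsilon$. First I would write the BSDE satisfied by $(y^\varepsilon-\bar y, q^\varepsilon-\bar q, z^\varepsilon-\bar z)$ by subtracting the state equation \eqref{eq:1.3} for $\bar u$ from that for $u^\varepsilon$, and then use the first-order Taylor expansion of $f$ with integral-form remainder: for each argument $v\in\{y,q,z,u\}$ write
\[
f(t,y^\varepsilon,q^\varepsilon,z^\varepsilon,u^\varepsilon)-f(t,\bar y,\bar q,\bar z,\bar u)
=\int_0^1 \langle \nabla f(t,\bar\cdot+\theta(\cdot^\varepsilon-\bar\cdot)),(\cdot^\varepsilon-\bar\cdot)\rangle\,d\theta,
\]
so that, after dividing by $\varepsilon$ and subtracting the variational equation \eqref{eq:32}, $(\tilde y^\varepsilon,\tilde q^\varepsilon,\tilde z^\varepsilon)$ solves a linear BSDE with the \emph{same} leading coefficients $f_y(t,\bar y,\bar q,\bar z,\bar u)$, $f_q(\cdots)$, $f_z(\cdots)$ acting on $(\tilde y^\varepsilon,\tilde q^\varepsilon,\tilde z^\varepsilon)$, zero terminal value, and a driver perturbation of the form
\[
\rho^\varepsilon(t)=\int_0^1\big[\nabla f(t,\bar\cdot+\theta(\cdot^\varepsilon-\bar\cdot))-\nabla f(t,\bar y,\bar q,\bar z,\bar u)\big]\cdot\Big(\tfrac{\cdot^\varepsilon-\bar\cdot}{\varepsilon}\Big)\,d\theta.
\]

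Next I would apply the continuous dependence estimate for BSDEs (Lemma \ref{lem:1.4}), comparing $(\tilde y^\varepsilon,\tilde q^\varepsilon,\tilde z^\varepsilon)$ with the zero solution; since the leading coefficients are uniformly bounded (Assumption \ref{ass:2.1}), this gives
\[
E\sup_{0\le t\le T}|\tilde y^\varepsilon(t)|^2+E\int_0^T|\tilde q^\varepsilon(t)|^2dt+E\int_0^T\|\tilde z^\varepsilon(t)\|_{l^2(\mathbb R^n)}^2dt
\;\le\; K\,E\int_0^T|\rho^\varepsilon(t)|^2dt.
\]
So the whole statement reduces to proving $E\int_0^T|\rho^\varepsilon(t)|^2dt\to 0$ as $\varepsilon\to0$. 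Splitting $\rho^\varepsilon$ into its four summands (one per variable $y,q,z,u$), each summand is a product of a ``coefficient increment'' $\Delta^\varepsilon_v(t,\theta):=f_v(t,\bar\cdot+\theta(\cdot^\varepsilon-\bar\cdot))-f_v(t,\bar y,\bar q,\bar z,\bar u)$ and the bounded-in-$L^2$ quantity $(\cdot^\varepsilon-\bar\cdot)/\varepsilon$. For the $u$-term this factor is literally $u-\bar u$; for the $y,q,z$-terms Lemma \ref{lem:3.2} shows $(\cdot^\varepsilon-\bar\cdot)/\varepsilon$ is bounded in $M^2_{\mathscr F}$ uniformly in $\varepsilon$.

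The main obstacle is precisely controlling $E\int_0^T|\rho^\varepsilon(t)|^2dt$: one cannot simply pass the limit inside because the ``large'' factor $(\cdot^\varepsilon-\bar\cdot)/\varepsilon$ only converges in $L^2$, not pointwise, while the coefficient increment $\Delta^\varepsilon_v$ converges to $0$ only in probability (via continuity of $\nabla f$ and the fact, from Lemma \ref{lem:3.2}, that $\cdot^\varepsilon\to\bar\cdot$ in $L^2$, hence along a subsequence a.e.). The clean way around this is a uniform-integrability / dominated-convergence argument: the increments $\Delta^\varepsilon_v$ are uniformly bounded by $2\sup|\nabla f|$ (Assumption \ref{ass:2.1}), and $\to 0$ in $dt\,dP\,d\theta$-measure, hence $|\Delta^\varepsilon_v|^2\to0$ in $L^1(dt\,dP\,d\theta)$; meanwhile $|(\cdot^\varepsilon-\bar\cdot)/\varepsilon|^2$ is bounded in $L^1(dt\,dP)$. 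Then for the $u$-term I bound $E\int_0^T|\Delta^\varepsilon_u(t,\theta)|^2|u(t)-\bar u(t)|^2\,dt$ and use that $\{|u-\bar u|^2\}$ generates a uniformly integrable family against the (bounded) measure $|\Delta^\varepsilon_u|^2\,dt\,dP$, which tends to $0$; for the $y,q,z$-terms I instead split $E\int|\Delta^\varepsilon_v|^2 |(\cdot^\varepsilon-\bar\cdot)/\varepsilon|^2$ over the sets $\{|(\cdot^\varepsilon-\bar\cdot)/\varepsilon|\le N\}$ and its complement: on the former the boundedness of $\Delta^\varepsilon_v$ and its convergence to $0$ in measure kill the integral, on the latter Chebyshev plus the uniform $L^2$-bound from Lemma \ref{lem:3.2} makes the contribution $\le K\sup|\nabla f|^2/N^2$, uniformly in $\varepsilon$, hence arbitrarily small by choosing $N$ large first. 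Combining the two regimes and letting $\varepsilon\to0$ then $N\to\infty$ yields $E\int_0^T|\rho^\varepsilon(t)|^2dt\to0$, which via the displayed BSDE estimate completes the proof.
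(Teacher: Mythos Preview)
Your overall strategy---write the BSDE for the remainder, apply Lemma~\ref{lem:1.4}, and show the driver perturbation vanishes---is the same as the paper's. But your execution has a genuine gap in the truncation step, and it stems from a suboptimal choice earlier.

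When you set up the BSDE for $(\tilde y^\varepsilon,\tilde q^\varepsilon,\tilde z^\varepsilon)$, you chose $f_y,f_q,f_z$ (at the base point) as the leading coefficients, which forces the free term $\rho^\varepsilon$ to carry the \emph{$\varepsilon$-dependent} factors $X_v^\varepsilon:=(\,\cdot^\varepsilon-\bar\cdot\,)/\varepsilon$ for $v\in\{y,q,z\}$. You then try to control $E\int|\Delta_v^\varepsilon|^2|X_v^\varepsilon|^2$ by splitting on $\{|X_v^\varepsilon|\le N\}$ and $\{|X_v^\varepsilon|>N\}$, claiming the tail piece is $\le K\sup|\nabla f|^2/N^2$ ``by Chebyshev plus the uniform $L^2$-bound''. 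This is false: Chebyshev only gives a bound on the \emph{measure} of $\{|X_v^\varepsilon|>N\}$, not on $E\int|X_v^\varepsilon|^2\mathbf 1_{\{|X_v^\varepsilon|>N\}}$. What you need there is uniform integrability of $\{|X_v^\varepsilon|^2\}_\varepsilon$, and mere $L^2$-boundedness (i.e.\ $L^1$-boundedness of the squares) does not provide it. So as written the argument does not close.

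The paper avoids this obstacle altogether by making the opposite choice in Lemma~\ref{lem:1.4}: it compares the two BSDEs by evaluating the driver difference at the solution $(\varepsilon Y,\varepsilon Q,\varepsilon Z)$ of the \emph{variational} equation. Equivalently, in your language, take $\tilde f_y^\varepsilon,\tilde f_q^\varepsilon,\tilde f_z^\varepsilon$ (which are still uniformly bounded, hence admissible Lipschitz coefficients with constant independent of $\varepsilon$) as the leading coefficients. The free term then becomes
\[
\alpha(\varepsilon)=E\int_0^T\Big|(\tilde f_y^\varepsilon-f_y)Y+(\tilde f_q^\varepsilon-f_q)Q+(\tilde f_z^\varepsilon-f_z)Z+(\tilde f_u^\varepsilon-f_u)(u-\bar u)\Big|^2dt,
\]
which involves only the \emph{fixed} processes $Y,Q,Z,u-\bar u$. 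Now the integrand is dominated by $C\big(|Y|^2+|Q|^2+\|Z\|_{l^2}^2+|u-\bar u|^2\big)\in L^1$, and the coefficient increments $\tilde f_v^\varepsilon-f_v$ are bounded and tend to $0$ in measure by Lemma~\ref{lem:3.2} and the continuity of the Fr\'echet derivatives; dominated convergence gives $\alpha(\varepsilon)\to0$ immediately, with no truncation or uniform-integrability detour. That single change in the decomposition is the missing idea that repairs your proof.
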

\begin{proof}
Firstly, one can check that
$$
\begin{array}{ll}
&y^\varepsilon(t)-\bar{y}(t)\\
=&\displaystyle\int_t^T\bigg[{f}_y^\varepsilon(s)
(y^\varepsilon(s)-\bar{y}(s))+{f}_q^\varepsilon(s)
(q^\varepsilon(s)-\bar{q}(s))\\&\ \ \ \ \ \ \ \
+{f}_z^\varepsilon(s)
(z^\varepsilon(s)-\bar{z}(s))+{f}_u^\varepsilon(s)
(u^\varepsilon(s)-\bar{u}(s))\bigg]ds
\\&\ \ \ \ \ -\displaystyle\sum_{i=1}^d\int_t^T\big(q^{i\varepsilon}(s)-
\bar{q}^i(s)\big)dW^i(s)-\displaystyle\sum_{i=1}^\infty\int_t^T
\big(z^{i\varepsilon}(s)-\bar{z}^i(s)\big)dH^i(s)
\end{array}
$$
and
\begin{eqnarray*}
  \begin{split}
  \varepsilon Y(t)=& \displaystyle \displaystyle\int_t^T\bigg[{f}_y(s)
\varepsilon{Y}(s)++{f}_q(s)\varepsilon  Q(s)+{f}_z(s)\varepsilon
Z(s)+{f}_u(s) \varepsilon(u(s)-\bar{u}(s))\bigg]ds
\\&\ \ \ \ \ -\displaystyle\sum_{i=1}^d\int_t^T\varepsilon
Q^i(s)dW^i(s)-\displaystyle\sum_{i=1}^\infty\int_t^T \varepsilon
Z^i(s)dH^i(s),
  \end{split}
\end{eqnarray*}
where we have used the abbreviations for $\varphi = f,l$ as follows:
\begin{numcases}{}\label{tz1}
\varphi_y(t)=\varphi_y(t,\bar{y}(t),\bar{q}(t),\bar{z}(t),\bar{u}_t),
\nonumber\\\varphi_z(t)=\varphi_z(t,\bar{y}(t),\bar{q}(t),\bar{z}(t),\bar{u}_t),
\nonumber\\\varphi_q(t)=\varphi_q(t,\bar{y}(t),\bar{q}(t),\bar{z}(t),\bar{u}_t),
\nonumber\\\varphi_u(t)=\varphi_u(t,\bar{y}(t),\bar{q}(t),\bar{z}(t),\bar{u}_t),
\\\tilde{\varphi}_y^\varepsilon(t)=
\displaystyle\int_0^1\varphi_y(t,\bar{y}(t)
+\lambda(y^\varepsilon(t)-\bar{y}(t)),\bar{z}(t)+\lambda(z^\varepsilon(t)-\bar{z}(t)),\bar{u}(t)
+\lambda(u^\varepsilon(t)-u(t)))d\lambda,
\nonumber\\\tilde{\varphi}_z^\varepsilon(t)=
\displaystyle\int_0^1\varphi_z(t,\bar{y}(t)
+\lambda(y^\varepsilon(t)-\bar{y}(t)),\bar{z}(t)+
\lambda(z^\varepsilon(t)-\bar{z}(t)),\bar{u}(t)
+\lambda(u^\varepsilon(t)-u(t)))d\lambda,
\nonumber\\\tilde{\varphi}_q^\varepsilon(t)=
\displaystyle\int_0^1\varphi_q(t,\bar{y}(t)
+\lambda(q^\varepsilon(t)-\bar{y}(t)),\bar{z}(t)+
\lambda(q^\varepsilon(t)-\bar{z}(t)),\bar{u}(t)
+\lambda(u^\varepsilon(t)-u(t)))d\lambda,
\nonumber\\\tilde{\varphi}_u^\varepsilon(t)=
\displaystyle\int_0^1\varphi_u(t,\bar{y}(t)
+\lambda(y^\varepsilon(t)-\bar{y}(t)),\bar{z}(t)+
\lambda(z^\varepsilon(t)-\bar{z}(t)),\bar{u}(t)
+\lambda(u^\varepsilon(t)-u(t)))d\lambda.\nonumber
\end{numcases}
Thus by Lemma \ref{lem:1.4} again, we get
\begin{eqnarray}\label{tz5}
\begin{array}{ll}
&E\displaystyle \sup_{0\leq t\leq
T}|y^\varepsilon(t)-\bar{y}(t)-\varepsilon
Y(t)|^2+E\int_0^T|q^\varepsilon(t)-\bar{q}(t)-\varepsilon Q(t)|^2dt
\\&~+E\displaystyle\int_0^T||z^\varepsilon(t) -\bar{z}(t)-\varepsilon
Z(t)||^2_{l^2({\mathbb{R}^n})}dt\\\leq&K\varepsilon^2\bigg[E\displaystyle\int_0^T
\bigg
|(\tilde{f}^\varepsilon_y(t)-f_y(t))Y(t)+(\tilde{f}^\varepsilon_q(t)-f_q(t))Q(t)+(\tilde{f}^\varepsilon_z(t)-f_z(t))Z(t)
\\&~~~~~~~~~~~~~~~~~+
(\tilde{f}^\varepsilon_u(t)-f_u(t))(u(t)-\bar {u}(t)) \bigg|^2dt
 \bigg]
\\=&K\varepsilon^2\cdot\alpha(\varepsilon),
\end{array}
\end{eqnarray}
where
\begin{eqnarray*}
  \begin{split}
\alpha(\varepsilon)=E\displaystyle&\int_0^T \bigg
|(\tilde{f}^\varepsilon_y(t)-f_y(t))Y(t)+(\tilde{f}^\varepsilon_q(t)-f_q(t))Q(t)
\\&\ \ \ \ \ \ \ +(\tilde{f}^\varepsilon_z(t)-f_z(t))Z(t)+
(\tilde{f}^\varepsilon_u(t)-f_u(t))(u(t)-\bar {u}(t)) \bigg|^2dt.
  \end{split}
\end{eqnarray*}
Consequently, using Lemma \ref{lem:3.2} and Assumption
\ref{ass:2.1}, by the dominated convergence theorem we can deduce
$$
\displaystyle\lim_{\varepsilon\rightarrow 0}\alpha(\varepsilon)=0.
$$
Then the lemma follows from above and (\ref{tz5}).
\end{proof}
\vspace{1mm}

\begin{lem} \label{lem:3.3}
Under Assumptions \ref{ass:2.1}-\ref{ass:2.2}, using the
abbreviations (\ref{tz1}) we have
\begin{equation*}
\begin{array}{ll}
J(u^\varepsilon(\cdot))-J(\bar{u}(\cdot))=&\varepsilon
E\phi_y(\bar{y}(0)) Y(0)+\varepsilon E\displaystyle\int_0^Tl_y(t)
Y(t)dt+\varepsilon E\displaystyle\int_0^Tl_q(t) Q(t)dt
\\&+\varepsilon E\displaystyle\int_0^Tl_z(t) Z(t)dt
+\varepsilon E\displaystyle\int_0^Tl_u(t)
(u(t)-\bar{u}(t))dt+o(\varepsilon).
\end{array}
\end{equation*}
\end{lem}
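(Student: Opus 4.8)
The plan is to expand separately the two contributions to $J(u^\varepsilon(\cdot))-J(\bar u(\cdot))$, namely the running cost $E\int_0^T l\,dt$ and the initial cost $E\,\Phi(y(0))$, and in each case to apply the fundamental theorem of calculus so as to write the increment as an integral over $\lambda\in[0,1]$ of the Fr\'echet derivatives of $l$ (resp.\ $\phi$) acting on the increments of the state and the control. Using the averaged derivatives $\tilde l^\varepsilon_y,\tilde l^\varepsilon_q,\tilde l^\varepsilon_z,\tilde l^\varepsilon_u$ from \eqref{tz1} and $\tilde\phi^\varepsilon_y:=\int_0^1\phi_y\big(\bar y(0)+\lambda(y^\varepsilon(0)-\bar y(0))\big)\,d\lambda$, and writing $\Delta y^\varepsilon:=y^\varepsilon-\bar y$, $\Delta q^\varepsilon:=q^\varepsilon-\bar q$, etc., this gives the exact identity
\begin{align*}
J(u^\varepsilon(\cdot))-J(\bar u(\cdot))
={}&E\,\tilde\phi^\varepsilon_y\big(y^\varepsilon(0)-\bar y(0)\big)
+E\int_0^T\big[\tilde l^\varepsilon_y(t)\,\Delta y^\varepsilon(t)+\tilde l^\varepsilon_q(t)\,\Delta q^\varepsilon(t)\\
&\qquad\qquad{}+\tilde l^\varepsilon_z(t)\,\Delta z^\varepsilon(t)+\tilde l^\varepsilon_u(t)\,\Delta u^\varepsilon(t)\big]\,dt .
\end{align*}

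Next, into each product I would insert the first-order expansion of the increments. One has $\Delta u^\varepsilon(t)=\varepsilon(u(t)-\bar u(t))$ exactly, while Lemma~\ref{eq:3.2} provides $\Delta y^\varepsilon=\varepsilon Y+r^\varepsilon_y$, $\Delta q^\varepsilon=\varepsilon Q+r^\varepsilon_q$, $\Delta z^\varepsilon=\varepsilon Z+r^\varepsilon_z$ with $r^\varepsilon_y,r^\varepsilon_q,r^\varepsilon_z$ of size $o(\varepsilon^2)$ in the (squared) norms of Lemma~\ref{eq:3.2}. Writing, for instance,
\[
\tilde l^\varepsilon_y(t)\,\Delta y^\varepsilon(t)=\varepsilon\, l_y(t)Y(t)+\varepsilon\big(\tilde l^\varepsilon_y(t)-l_y(t)\big)Y(t)+\tilde l^\varepsilon_y(t)\,r^\varepsilon_y(t),
\]
and similarly for the $q$-, $z$-, $u$- and terminal terms, and then summing and taking $E\int_0^T\cdot\,dt$ (resp.\ $E[\cdot]$ at $t=0$), one isolates precisely the asserted leading term. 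It remains only to check that every other term is $o(\varepsilon)$.

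There are two kinds of remainder. For terms such as $E\int_0^T\tilde l^\varepsilon_y(t)\,r^\varepsilon_y(t)\,dt$ I would use the Cauchy--Schwarz inequality: by the growth bound $|l_y|+|l_q|+|l_z|+|l_u|\le C(1+|y|+|q|+|z|+|u|)$ of Assumption~\ref{ass:2.2}, combined with the a priori estimate \eqref{eq:1.5} applied to both $(y^\varepsilon,q^\varepsilon,z^\varepsilon)$ and $(\bar y,\bar q,\bar z)$ and with $E\int_0^T|u^\varepsilon(t)|^2dt\le K$, the family $\{\tilde l^\varepsilon_y\}_\varepsilon$ (and likewise $\{\tilde l^\varepsilon_q\},\{\tilde l^\varepsilon_z\},\{\tilde l^\varepsilon_u\},\{\tilde\phi^\varepsilon_y\}$) is bounded in the relevant $M^2_{\mathscr F}$ (resp.\ $L^2(\Omega)$) norm uniformly in $\varepsilon$; since $\|r^\varepsilon_y\|_{M^2}=o(\varepsilon)$ by Lemma~\ref{eq:3.2}, such a product is $o(\varepsilon)$, and the terminal remainder $E\big[\tilde\phi^\varepsilon_y\big(y^\varepsilon(0)-\bar y(0)-\varepsilon Y(0)\big)\big]$ is handled the same way using the $\sup$-part of Lemma~\ref{eq:3.2}. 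For the cross terms $\varepsilon E\int_0^T\big(\tilde l^\varepsilon_y(t)-l_y(t)\big)Y(t)\,dt$ and their analogues, and $\varepsilon E\big[(\tilde\phi^\varepsilon_y-\phi_y(\bar y(0)))Y(0)\big]$, I would first note that by Lemma~\ref{lem:3.2} the perturbed states $(y^\varepsilon,q^\varepsilon,z^\varepsilon)$ converge to $(\bar y,\bar q,\bar z)$ in $L^2$, hence along a subsequence a.e.\ in $(t,\omega)$, so continuity of $l_y$ forces $\tilde l^\varepsilon_y\to l_y$ a.e.; together with the uniform $L^2$-bound above this upgrades to convergence in $L^2$ by the Vitali convergence theorem, and Cauchy--Schwarz against $Y\in M^2_{\mathscr F}$ then makes the cross term $o(\varepsilon)$. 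Collecting all the contributions gives the claimed expansion.

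I expect this cross-term step to be the only genuinely delicate point: because the averaged derivatives $\tilde l^\varepsilon_\bullet$ are evaluated along $\varepsilon$-dependent trajectories, one cannot invoke the dominated convergence theorem with a single fixed majorant, and one must instead exploit the uniform $L^2$-bound on $\{\tilde l^\varepsilon_\bullet\}$ — equivalently, the uniform integrability of $\{|\tilde l^\varepsilon_\bullet|^2\}$ — to pass to the limit. Once this is granted, the remaining manipulations are routine applications of Cauchy--Schwarz together with the two estimates of Lemmas~\ref{lem:3.2} and~\ref{eq:3.2}.
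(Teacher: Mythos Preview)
Your proof is correct and follows essentially the same route as the paper's: a first-order expansion via the averaged derivatives $\tilde l^\varepsilon_\bullet,\tilde\phi^\varepsilon_y$, followed by a splitting into the leading term plus remainders controlled by Cauchy--Schwarz together with Lemmas~\ref{lem:3.2} and~\ref{eq:3.2}. Your decomposition $\tilde l^\varepsilon_y\Delta y^\varepsilon=\varepsilon l_yY+\varepsilon(\tilde l^\varepsilon_y-l_y)Y+\tilde l^\varepsilon_y r^\varepsilon_y$ is algebraically equivalent to the paper's $\tilde l^\varepsilon_y\Delta y^\varepsilon=\varepsilon l_yY+l_y r^\varepsilon_y+(\tilde l^\varepsilon_y-l_y)\Delta y^\varepsilon$, and your appeal to uniform integrability/Vitali for the cross terms is in fact more careful than the paper's bare invocation of the dominated convergence theorem, but the overall argument is the same.
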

\begin{proof}
After a first order  development, we have
\begin{align*}
&J(u^\varepsilon(\cdot))-J(\bar{u}(\cdot))
\\
=&E\displaystyle\int_0^1\phi_y(\bar{y}(0)
+\lambda(y^\varepsilon(0)-\bar{y}(0)))(y^\varepsilon(0)-\bar{y}(0))d\lambda
\\&+E\displaystyle\int_0^T \tilde{l}_y^\varepsilon
(t)(y^\varepsilon(t)-\bar{y}(t))dt+E\displaystyle\int_0^T
\tilde{l}_q^\varepsilon
(t)(q^\varepsilon(t)-\bar{q}(t))dt\\&+E\displaystyle\int_0^T
\tilde{l}_z^\varepsilon
(t)(z^\varepsilon(t)-\bar{z}(t))dt+E\displaystyle\int_0^T
\tilde{l}_u^\varepsilon (t)(u^\varepsilon(t)-\bar{u}(t))dt
\\=&\varepsilon E\phi_y(\bar{y}(0))
Y(0)+E\phi_y(\bar{y}(0))(y^\varepsilon(0)-\bar{y}(0)-\varepsilon
Y(0))
\\&+E\displaystyle\int_0^1\bigg[\phi_y(\bar{y}(0)
+\lambda(y^\varepsilon(0)-\bar{y}(0)))-
\phi_y(\bar{y}(0))\bigg](y^\varepsilon(0)-\bar{y}(0))d\lambda
\\&+\varepsilon E\displaystyle\int_0^T l_y (t)
Y(t)dt+E\displaystyle\int_0^T l_y (t)
(y^\varepsilon(t)-\bar{y}(t)-\varepsilon Y(t))dt
\\&+E\displaystyle\int_0^T (\tilde{l}_y^\varepsilon
(t)-l_y (t))(y^\varepsilon(t)-\bar{y}(t))dt
\\&+\varepsilon E\displaystyle\int_0^T l_q (t)
q(t)dt+E\displaystyle\int_0^T l_q (t)
(q^\varepsilon(t)-\bar{q}(t)-\varepsilon Q(t))dt
\\&+E\displaystyle\int_0^T (\tilde{l}_q^\varepsilon
(t)-l_q (t))(q^\varepsilon(t)-\bar{q}(t))dt
\\&+\varepsilon E\displaystyle\int_0^T l_z (t)
Z(t)dt+E\displaystyle\int_0^T l_z (t)
(z^\varepsilon(t)-\bar{z}(t)-\varepsilon Z(t))dt
\\&+E\displaystyle\int_0^T (\tilde{l}_z^\varepsilon
(t)-l_z (t))(z^\varepsilon(t)-\bar{z}(t))dt
\\&+E\displaystyle\int_0^T l_u (t)\varepsilon (u(t)-\bar{u}(t))dt
+E\displaystyle\int_0^T (\tilde{l}_u^\varepsilon (t)-l_u
(t))\varepsilon (u(t)-\bar{u}(t))dt
\\=&\varepsilon E\phi_y(\bar{y}(0))
Y(0) +\varepsilon E\displaystyle\int_0^1l_y(t)Y(t)dt+\varepsilon
E\displaystyle\int_0^1l_q(t)Q(t)dt
\\&+\varepsilon
E\displaystyle\int_0^1l_z(t)Z(t)dt +\varepsilon
E\displaystyle\int_0^1l_u(t) (u(t)-\bar{u}(t))dt+\beta(\varepsilon),
\end{align*}
where $\beta(\varepsilon)$ is given by
$$
\begin{array}{ll}
\beta(\varepsilon)=&E\phi_y(\bar{y}(0))(y^\varepsilon(0)-\bar{y}(0)-\varepsilon
Y(0))
\\&+E\displaystyle\int_0^1\bigg[\phi_y(\bar{y}(0)
+\lambda(y^\varepsilon(0)-\bar{y}(0)))-
\phi_y(\bar{y}(0))\bigg](y^\varepsilon(0)-\bar{y}(0))d\lambda
\\&+E\displaystyle\int_0^T l_y (t)
(y^\varepsilon(t)-\bar{y}(t)-\varepsilon
Y(t))dt+E\displaystyle\int_0^T (\tilde{l}_y^\varepsilon (t)-l_y
(t))(y^\varepsilon(t)-\bar{y}(t))dt
\\&+E\displaystyle\int_0^T l_q (t)
(q^\varepsilon(t)-\bar{q}(t)-\varepsilon
Q(t))dt+E\displaystyle\int_0^T (\tilde{l}_q^\varepsilon (t)-l_q
(t))(q^\varepsilon(t)-\bar{q}(t))dt
\\&+E\displaystyle\int_0^T l_z (t)
(z^\varepsilon(t)-\bar{z}(t)-\varepsilon
Z(t))dt+E\displaystyle\int_0^T (\tilde{l}_z^\varepsilon (t)-l_z
(t))(z^\varepsilon(t)-\bar{z}(t))dt
\\&+E\displaystyle\int_0^T (\tilde{l}_u^\varepsilon (t)-l_u
(t))\varepsilon (u(t)-\bar{u}(t))dt.
\end{array}
$$
Thus combining Lemma \ref{lem:3.2}, Lemma \ref{lem:3.3} and
Assumption \ref{ass:2.2}, by the dominated convergence theorem we
conclude that $\beta(\varepsilon)=o(\varepsilon)$.
\end{proof}
\vspace{1mm}

By Lemma \ref{lem:3.3} and the fact that  $\displaystyle
\lim_{\varepsilon\rightarrow
0^+}\frac{J(u^\varepsilon)-J(\bar{u})}{\varepsilon}\geq 0$, we can
further deduce
\begin{cor} Under Assumptions \ref{ass:2.1}-\ref{ass:2.2}, we have the variation
inequality below
 \begin{equation}\label{eq:4.4}
\begin{array}{ll}
&E\phi_y(\bar{y}(0)) Y(0)+ E\displaystyle\int_0^Tl_y(t) Y(t)dt
+E\displaystyle\int_0^Tl_y(t) Y(t)dt\\&+
E\displaystyle\int_0^Tl_z(t) Z(t)dt+ E\displaystyle\int_0^Tl_u(t)
(u(t)-\bar{u}(t))dt\geq 0.
\end{array}
\end{equation}
\end{cor}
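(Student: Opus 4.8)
The plan is to combine the first-order expansion of the cost functional obtained in Lemma \ref{lem:3.3} with the optimality of $\bar u(\cdot)$. First I would check that the convex perturbation $u^\varepsilon(\cdot)=\bar u(\cdot)+\varepsilon(u(\cdot)-\bar u(\cdot))$ is itself admissible for every $\varepsilon\in(0,1]$: since $U$ is convex and $\bar u(t),u(t)\in U$, the process $u^\varepsilon(\cdot)$ takes values in $U$, is clearly ${\mathscr F}_t$-predictable, and satisfies $E\int_0^T|u^\varepsilon(t)|^2dt<\infty$ because it is a convex combination of two square-integrable processes; hence $u^\varepsilon(\cdot)\in{\cal A}$. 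By the definition of an optimal control in Problem \ref{pro:2.1} we then have $J(u^\varepsilon(\cdot))\geq J(\bar u(\cdot))$, and therefore $\varepsilon^{-1}\big(J(u^\varepsilon(\cdot))-J(\bar u(\cdot))\big)\geq 0$ for all $\varepsilon\in(0,1]$.

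Next I would substitute the expansion established in Lemma \ref{lem:3.3},
$$
J(u^\varepsilon(\cdot))-J(\bar u(\cdot))
=\varepsilon\Big[E\phi_y(\bar y(0))Y(0)
+E\int_0^T\big(l_y(t)Y(t)+l_q(t)Q(t)+l_z(t)Z(t)+l_u(t)(u(t)-\bar u(t))\big)\,dt\Big]+o(\varepsilon),
$$
into this difference quotient, divide by $\varepsilon>0$, and pass to the limit $\varepsilon\to 0^+$. The remainder term contributes $o(\varepsilon)/\varepsilon\to 0$, so the limit of the (nonnegative) difference quotients equals the bracketed expression, which is precisely the left-hand side of \eqref{eq:4.4}. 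This yields the claimed variational inequality.

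The argument is short because all the analytic effort has already been spent: the $O(\varepsilon^2)$ estimate of the state perturbation (Lemma \ref{lem:3.2}), the $o(\varepsilon^2)$ identification of $\varepsilon(Y,Q,Z)$ as the leading term of the state variation (Lemma \ref{eq:3.2}), and the control of the remainder $\beta(\varepsilon)=o(\varepsilon)$ via dominated convergence together with the growth bounds of Assumption \ref{ass:2.2} (Lemma \ref{lem:3.3}). I do not expect any genuine obstacle in this final step; the only structural point that matters is the admissibility of $u^\varepsilon(\cdot)$, which is exactly where the convexity of the control domain $U$ is used — without it the convex perturbation would leave ${\cal A}$ and the inequality $J(u^\varepsilon(\cdot))\geq J(\bar u(\cdot))$ could not be invoked.
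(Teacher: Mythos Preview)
Your proposal is correct and follows exactly the approach indicated in the paper: the corollary is derived immediately from Lemma~\ref{lem:3.3} together with the observation that $\varepsilon^{-1}\big(J(u^\varepsilon(\cdot))-J(\bar u(\cdot))\big)\geq 0$ for $\varepsilon>0$. Your added justification that $u^\varepsilon(\cdot)\in{\cal A}$ (using the convexity of $U$) makes explicit a point the paper leaves implicit, but the argument is the same.
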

%

\section {Necessary and sufficient optimality conditions}

We first introduce the adjoint equation corresponding to the
variational equation \eqref{eq:32}:
\begin{numcases}{}\label{eq:41}
dk(t)=-\bigg[-f_y^*(t)k(t)+l_y(t)\bigg]dt- \displaystyle\sum_{i=1}^d
\bigg[-f_{q^i}^*(t)k(t)+l_{q^i}(t)\bigg]dW^i(t)\nonumber\\\
\ \ \ \ \ \ \ \ \ \
-\displaystyle\sum_{i=1}^\infty\bigg[-f_{z^i}^*(t)k(t)
+l_{z^i}(t)\bigg]dH^i(t)\\
k(0)=-\phi_y(\bar{y}(0)), ~~~~0\leq t\leq T,\nonumber
\end{numcases}
where $f_y^*, f_{q^i}^* $and $f_{z^i}^*$  are the dual operators of
$f_y, f_{q^i}$ and $f_{z^i}$, respectively.

Under Assumptions \ref{ass:2.1}-\ref{ass:2.2}, by Lemma
\ref{lem:1.1} it is easy to see that the above adjoint equation has
a unique solution $k(\cdot)\in {\cal S}^2_{\mathscr{F}}(0,T;
\mathbb{R}^n)$. Then we define the Hamiltonian function
$H:[0,T]\times \mathbb{R}^n\times
 \mathbb{R}^{n\times d}\times l^2(\mathbb{R}^n)\times U\times \mathbb{R}^n \rightarrow \mathbb{R}^1$ by
\begin{equation}\label{eq:4.2}
\begin{array}{ll} \displaystyle
H(t,y,q,z,u,k)=\langle k, -f(t,y,q,z,u)\rangle+l(t,y,q,z,u)
\end{array}
\end{equation}
and rewrite the adjoint equation in the Hamiltonian system form:
\begin{equation}\label{eq:4.5}
\left\{\begin{array}{ll}
dk(t)=-H_y(t,\bar{y}(t),\bar{q}(t),\bar{z}(t),\bar{u}(t),k(t))dt\\
\ \ \ \ \ \ \ \ \ \ \ -\displaystyle\sum_{i=1}^d H_q^{i}
(t,\bar{y}(t),\bar{q}(t),\bar{z}(t),\bar{u}(t),k(t))dW^i(t)\\
\ \ \ \ \ \ \ \ \ \ \ -\displaystyle\sum_{i=1}^\infty H_{z^i}(t,\bar{y}(t),\bar{q}(t),\bar{z}(t),\bar{u}(t),k(t))dH^i(t)\\
k(0)=-\phi_y(\bar{y}(0)).
\end{array}\right.
\end{equation}

Now we are ready to give the necessary conditions for an optimal
control of Problem \ref{pro:2.1}.
\begin{thm} Under Assumptions \ref{ass:2.1}-\ref{ass:2.2},
if $(\bar{u}(\cdot); \bar{y}(\cdot),\bar{q}(\cdot), \bar{z}(\cdot))$
is  an optimal pair of  Problem \ref{pro:2.1}, then we have
\begin{equation}\label{eq:44}
H_u(t,\bar{y}(t-),\bar{q}(t),\bar{z}(t),\bar{u}(t),k(t-))
(u-\bar{u}(t))\geq 0,~\forall u\in U,\ \ a.e.\ a.s.,
\end{equation}
where $k(\cdot)$ is the solution to the adjoint equation
\eqref{eq:41}.
\end{thm}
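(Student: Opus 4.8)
The plan is to derive the maximum principle \eqref{eq:44} from the variational inequality \eqref{eq:4.4} by bringing in the adjoint process $k$ and using a duality argument to eliminate the variational processes $(Y,Q,Z)$. First I would apply It\^o's product rule to $t\mapsto\langle k(t),Y(t)\rangle$ on $[0,T]$, combining the dynamics of $(Y,Q,Z)$ from \eqref{eq:32} with those of $k$ from \eqref{eq:41}. Because $W$ and $L$ are independent and the Teugel's martingales satisfy $\langle H^i,H^j\rangle=\delta_{ij}t$, the quadratic covariation of $k$ and $Y$ contributes $\sum_{i=1}^d\langle f_{q^i}^*(t)k(t),Q^i(t)\rangle\,dt$ along the Brownian channels and $\sum_{i=1}^\infty\langle f_{z^i}^*(t)k(t),Z^i(t)\rangle\,dt$ along the Teugel's channels; using $\langle f_y^*(t)k(t),Y(t)\rangle=\langle k(t),f_y(t)Y(t)\rangle$, $\langle f_{q^i}^*(t)k(t),Q^i(t)\rangle=\langle k(t),f_{q^i}(t)Q^i(t)\rangle$ and the analogous identity for $f_z$, all terms built from $f_y,f_q,f_z$ cancel against the drift. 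After verifying that the stochastic integrals in the expansion are true martingales (via the $S^2$, $M^2$ and $l^2_{\mathscr F}$ a priori bounds behind Lemmas \ref{lem:1.1} and \ref{lem:1.3}, applied to $k$ and to $(Y,Q,Z)$), taking expectations and using $Y(T)=0$ and $k(0)=-\phi_y(\bar y(0))$ would yield the identity
$$E\langle\phi_y(\bar y(0)),Y(0)\rangle+E\int_0^T\Big[\langle l_y(t),Y(t)\rangle+\langle l_q(t),Q(t)\rangle+\langle l_z(t),Z(t)\rangle\Big]dt=-E\int_0^T\big\langle k(t),f_u(t)(u(t)-\bar u(t))\big\rangle\,dt.$$

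Next I would substitute this into \eqref{eq:4.4}: its left-hand side, with the $l_u$-term set aside, is exactly the left-hand side of the identity above, so \eqref{eq:4.4} collapses to
$$E\int_0^T\Big[\big\langle k(t),-f_u(t)(u(t)-\bar u(t))\big\rangle+\big\langle l_u(t),u(t)-\bar u(t)\big\rangle\Big]dt\ge 0.$$
Since the definition \eqref{eq:4.2} of the Hamiltonian gives $H_u(t,\bar y(t),\bar q(t),\bar z(t),\bar u(t),k(t))=-f_u^*(t)k(t)+l_u(t)$, this is precisely $E\int_0^T\langle H_u(t),u(t)-\bar u(t)\rangle\,dt\ge 0$ for every admissible $u(\cdot)\in\mathcal A$.

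Finally I would localise to a pointwise statement. Fixing $v\in U$ and a predictable set $A\subseteq[0,T]\times\Omega$, I would take $u(\cdot)=v\,\mathbf 1_A+\bar u(\cdot)\,\mathbf 1_{A^c}$, which is admissible because $U$ is convex (indeed $u$ takes values in $\{v,\bar u\}\subseteq U$) and is square-integrable; the inequality then reads $E\int_0^T\mathbf 1_A(t)\langle H_u(t),v-\bar u(t)\rangle\,dt\ge 0$, and the arbitrariness of $A$ forces $\langle H_u(t,\bar y(t),\bar q(t),\bar z(t),\bar u(t),k(t)),v-\bar u(t)\rangle\ge 0$ for a.e.\ $t$, $P$-a.s. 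Running $v$ over a countable dense subset of $U$ and using the continuity of $H_u$ in $u$ (Assumptions \ref{ass:2.1}--\ref{ass:2.2}) would upgrade this to all $v\in U$ simultaneously; since the $dt$-integrals are insensitive to left limits, $\bar y(t)$ and $k(t)$ may be replaced by $\bar y(t-)$ and $k(t-)$, which gives \eqref{eq:44}.

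I expect the main obstacle to lie in the first step: justifying that the infinite series over the Teugel's channels converges and may be interchanged with the expectation, and that each stochastic integral appearing in the It\^o expansion is a genuine (not merely local) martingale. Both rest on the square-integrability of $k$ and of $(Y,Q,Z)$ established in the preliminary lemmas; the cancellations and the measurable-selection argument that follow are then routine.
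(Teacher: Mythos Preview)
Your proposal is correct and follows essentially the same route as the paper: apply It\^o's formula to $\langle k(t),Y(t)\rangle$, use the adjoint structure to cancel the $f_y,f_q,f_z$ terms, take expectations to obtain the duality identity, combine with the variational inequality \eqref{eq:4.4} to get $E\int_0^T\langle H_u(t),u(t)-\bar u(t)\rangle\,dt\ge 0$, and then localize. The paper's proof is in fact terser than yours---it simply asserts ``which implies \eqref{eq:44}'' for the passage from the integrated inequality to the pointwise one---so your explicit measurable-selection argument and the remark on left limits fill in details the paper leaves to the reader.
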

\begin{proof} By \eqref{eq:32} and \eqref{eq:41}, applying It\^{o} formula
to $\langle Y(t), k(t) \rangle$ we have
$$
\begin{array}{ll}
&E\phi_y(\bar{y}(0)) Y(0)+ E\displaystyle\int_0^Tl_y(t) Y(t)dt
 + E\displaystyle\int_0^Tl_z(t) Z(t)dt+E\displaystyle\int_0^Tl_u(t)
(u(t)-\bar{u}(t))dt\\
=&-E\displaystyle\int_0^T\langle
k(t),f_u(t,\bar{y}(t),\bar{q}(t),\bar{z}(t),
\bar{u}_t)(u(t)-\bar{u}(t))\rangle dt+E\displaystyle\int_0^Tl_u(t)
  (u(t)-\bar{u}(t))dt.
\end{array}
$$
Then noticing the definition of Hamilton function \eqref{eq:4.2} and
the variational inequality \eqref{eq:4.4}, for any $u(\cdot)\in{\cal
A}$, we have
$$
E\displaystyle\int_0^TH_u(t,\bar{y}(t),\bar{q}(t),
\bar{z}(t),\bar{u}(t),k(t)) (u(t)-\bar{u}(t))dt\geq 0,
$$
which implies (\ref{eq:44}).
\end{proof}
\vspace{1mm}


We then consider the sufficient conditions for an optimal control of
Problem \ref{pro:2.1}.
\begin{thm}
Under Assumptions \ref{ass:2.1}-\ref{ass:2.2}, let $(\bar{u}(\cdot);
\bar{y}(\cdot), \bar{q}(\cdot), \bar{z}(\cdot))$ be an admissible
pair and ${k}(\cdot)$ be the unique solution of the corresponding
adjoint equation \eqref{eq:4.5}. Assume that for almost all
$(t,\omega)\in [0,T]\times \Omega$ , $H(t,y, q, z, u, {k}(t))$ and
$\phi(y)$ are convex w.r.t. $(y,q,z,u)$ and $y$, respectively, and
the optimality condition
\begin{equation*}
H(t,\bar{y}(t),\bar{q}(t),\bar{z}(t),\bar{u}(t),k(t))
 =\displaystyle \min_{u\in U}H(t,\bar{y}(t),\bar{q}(t),\bar{z}(t),
 u,k(t))
\end{equation*}
holds, then $(\bar{u}(\cdot);  \bar y(\cdot),\bar q(\cdot), \bar
z(\cdot) )$ is an optimal
 pair of Problem \ref{pro:2.1}.
\end{thm}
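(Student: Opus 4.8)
The plan is to show $J(u(\cdot)) - J(\bar u(\cdot)) \ge 0$ for an arbitrary admissible control $u(\cdot)$, by exploiting convexity to reduce the difference of cost functionals to a sum of terms that can be handled by the optimality condition. First I would write, using the convexity of $\phi$ and the definition of the cost functional \eqref{eq:2.2}, the lower bound
\begin{align*}
J(u(\cdot)) - J(\bar u(\cdot)) \ge{}& E\,\phi_y(\bar y(0))(y(0) - \bar y(0)) \\
&+ E\int_0^T \big[ l(t,y(t),q(t),z(t),u(t)) - l(t,\bar y(t),\bar q(t),\bar z(t),\bar u(t))\big]\,dt.
\end{align*}
Here $(y(\cdot),q(\cdot),z(\cdot))$ denotes the state processes associated with $u(\cdot)$. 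Next I would express the running-cost integrand via the Hamiltonian: by \eqref{eq:4.2}, $l(t,y,q,z,u) = H(t,y,q,z,u,k(t)) + \langle k(t), f(t,y,q,z,u)\rangle$, so the difference of $l$'s splits into a difference of $H$'s plus a difference of $\langle k, f\rangle$'s.

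The second key step is to apply It\^{o}'s formula to $\langle k(t), y(t) - \bar y(t)\rangle$ on $[0,T]$, using the adjoint equation \eqref{eq:4.5} and the state equation \eqref{eq:1.3} for both $u$ and $\bar u$. Since $y(T) - \bar y(T) = \xi - \xi = 0$ and $k(0) = -\phi_y(\bar y(0))$, this identity will let me rewrite $E\,\phi_y(\bar y(0))(y(0)-\bar y(0))$ in terms of $E\int_0^T \langle k(t), f(t,y,q,z,u) - f(t,\bar y,\bar q,\bar z,\bar u)\rangle\,dt$ together with the integrals against $H_y$, $H_q$, $H_{z^i}$ contracted with $y-\bar y$, $q-\bar q$, $z-\bar z$. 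Substituting this into the bound above, the $\langle k, f\rangle$ terms cancel, and I am left with
\begin{align*}
J(u(\cdot)) - J(\bar u(\cdot)) \ge{}& E\int_0^T \Big[ H(t,y(t),q(t),z(t),u(t),k(t)) - H(t,\bar y(t),\bar q(t),\bar z(t),\bar u(t),k(t)) \\
&\qquad\quad - H_y(t)(y(t)-\bar y(t)) - H_q(t)(q(t)-\bar q(t)) - H_z(t)(z(t)-\bar z(t))\Big]\,dt,
\end{align*}
where $H_y(t)$, etc., are evaluated along the optimal pair.

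The final step is to invoke the convexity of $H(t,\cdot,\cdot,\cdot,\cdot,k(t))$ in $(y,q,z,u)$. Convexity gives the subgradient inequality
\begin{align*}
H(t,y,q,z,u,k(t)) - H(t,\bar y,\bar q,\bar z,\bar u,k(t)) \ge{}& H_y(t)(y-\bar y) + H_q(t)(q-\bar q) \\
&+ H_z(t)(z-\bar z) + H_u(t)(u-\bar u),
\end{align*}
so the integrand above is bounded below by $H_u(t,\bar y(t),\bar q(t),\bar z(t),\bar u(t),k(t))(u(t)-\bar u(t))$. By the optimality (minimum) condition on $H$, since $\bar u(t)$ minimizes $u \mapsto H(t,\bar y(t),\bar q(t),\bar z(t),u,k(t))$ over the convex set $U$, the first-order condition $H_u(t,\bar y(t),\bar q(t),\bar z(t),\bar u(t),k(t))(u-\bar u(t)) \ge 0$ holds for all $u\in U$, hence in particular for $u = u(t)$ a.e., a.s. Therefore the integrand is nonnegative and $J(u(\cdot)) \ge J(\bar u(\cdot))$, which proves optimality.

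I expect the main obstacle to be the bookkeeping in the It\^{o}-formula step: one must verify that the stochastic integrals against $W^i$ and against the Teugel martingales $H^i$ have zero expectation (which follows from the integrability of $k$, $Y$-type processes in $S^2_{\mathscr F}$, $M^2_{\mathscr F}$, $l^2_{\mathscr F}$ guaranteed by Lemmas \ref{lem:1.1} and \ref{lem:1.3}), and that the strong orthonormality $\langle H^i, H^j\rangle = \delta_{ij}t$ is used correctly so that the cross-variation term between the $dH^i$ parts of $k$ and of $y-\bar y$ produces exactly $\sum_i (\text{coefficient})^i (z^i - \bar z^i)$. A minor subtlety is that convexity of $H$ only yields a subgradient inequality, so one should note that the continuous Fr\'{e}chet differentiability assumed in Assumption \ref{ass:2.1}–\ref{ass:2.2} makes the gradient the relevant subgradient; no genuinely hard estimate is needed beyond what Lemma \ref{lem:1.4} already provides.
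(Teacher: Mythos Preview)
Your proposal is correct and follows essentially the same argument as the paper: both use convexity of $\phi$ for the terminal term, apply It\^{o}'s formula to $\langle k(t), y(t)-\bar y(t)\rangle$ to convert $E\phi_y(\bar y(0))(y(0)-\bar y(0))$ into integrals involving $H_y,H_q,H_z$ and $\langle k,f\rangle$, rewrite the running cost via the Hamiltonian so that the $\langle k,f\rangle$ terms cancel, and then close with the subgradient inequality for $H$ and the first-order optimality condition $H_u(\cdot)(u-\bar u)\ge 0$. The only difference is organizational---the paper labels the pieces $I_1,I_2,J_1,J_2,J_3$---not substantive.
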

\begin{proof} Let $(u(\cdot);   y(\cdot),q(\cdot), z(\cdot))$
 be an arbitrary admissible pair. It follows from
  the form of the cost functional \eqref{eq:2.2} that
\begin{eqnarray}\label{eq:5.10}
&&J(u(\cdot))-J(\bar{u}(\cdot))\nonumber\\
&=&E\displaystyle\int_0^T\bigg[l(t,y(t),q(t),z(t),u(t))-l(t,\bar{y}(t),\bar{q}(t),
\bar{z}(t),\nonumber
\bar{u}(t))\bigg]dt+E\bigg[\phi(y(0))-\phi(\bar{y}(0))\displaystyle\bigg]
\\&=&I_1+I_2,
\end{eqnarray}
where
\begin{equation*}\label{eq:5.3} \displaystyle
I_1=E\int_0^T\bigg[l(t,y(t),q(t),z(t),u(t))-l(t,\bar y(t),\bar
q(t),\bar z(t), \bar u(t))\bigg]dt
\end{equation*}
and
\begin{equation*}
I_2=E\bigg[\phi(y(0))-\phi(\bar{y}(0))\bigg].
\end{equation*}
Due to the convexity of $\phi$, applying It\^{o} formula to $\langle
{k}(t),y(t)-\bar{y}(t)\rangle$,  we have
\begin{eqnarray}\label{eq:5.5}
\begin{split} I_2=&E[\phi(y(0))-\phi(\bar{y}(0))]
\geq E[\langle\phi_y(\bar{y}(0)),y(0)-\bar{y}(0)\rangle]=-E[\langle{k}(0),y(0)-\bar{y}(0)\rangle]\\
=&-E\displaystyle\int_0^T\langle
H_y(t,\bar{y}(t),\bar{q}(t),\bar{z}(t),\bar{u}(t), {k}(t)),
y(t)-\bar{y}(t) \rangle dt
\\&-\sum_{i=1}^dE\displaystyle\int_0^T\langle H_q^i(t,\bar{y}(t),\bar{q}(t),\bar{z}(t),\bar{u}(t), {k}(t)), q^i(t)-\bar q^i(t)\rangle dt
\\&-\displaystyle\sum_{i=1}^\infty E\displaystyle\int_0^T \langle H_z^i(t,\bar{y}(t),\bar{q}(t),\bar{z}(t),\bar{u}(t), {k}(t)),
z^i(t)-\bar{z}^i(t)\rangle dt
\\&-E\displaystyle\int_0^T\langle
f(t,{y}(t),{q}(t),{z}(t),{u}(t)) -
f(t,\bar{y}(t),\bar{q}(t),\bar{z}(t),\bar{u}(t)), k(t)\rangle dt
\\=&-J_1+J_2,
\end{split}
\end{eqnarray}
where
$$
\begin{array}{ll}
J_1=&E\displaystyle\int_0^T\langle
H_y(t,\bar{y}(t),\bar{q}(t),\bar{z}(t), \bar{u}(t), {k}(t)),
y(t)-\bar{y}(t) \rangle dt
\\&+\displaystyle\sum_{i=1}^dE\displaystyle\int_0^T\langle H_q^i(t,\bar{y}(t),\bar{q}(t),\bar{z}(t),\bar{u}(t), {k}(t)),q^i(t)-\bar q^i(t)\rangle dt
\\&+\displaystyle\sum_{i=1}^\infty E\displaystyle\int_0^T \langle
H_z^i(t,\bar{y}(t),\bar{q}(t),\bar{z}(t),\bar{u}(t), {k}(t)),
z^i(t)-\bar{z}^i(t)\rangle dt
\end{array}
$$
and
$$J_2=-E\displaystyle\int_0^T\langle f(t,{y}(t),{q}(t),{z}(t),{u}(t))
-f(t,\bar{y}(t),\bar{q}(t),\bar{z}(t),\bar{u}(t)), k(t)\rangle dt.$$
Using the definition of the Hamiltonian function \eqref{eq:4.2}
again, we have
\begin{equation}\label{eq:5.6}
\begin{array}{ll}
I_1&=E\displaystyle\int_0^T\bigg[l(t,y(t),q(t),z(t),u(t))-l(t,\bar{y}(t),\bar{q}(t), \bar z(t), \bar u(t))\bigg]dt\\
&=E\displaystyle\int_0^T\bigg[H(t,y(t),q(t),z(t),u(t),k(t))
-H(t,\bar y(t), \bar q(t), \bar z(t),\bar u(t), k(t))\bigg]dt
\\&\ \ \ +E\displaystyle\int_0^T\langle f(t,{y}(t),{q}(t),{z}(t),{u}(t))
-f(t,\bar{y}(t),\bar{q}(t),\bar{z}(t),\bar{u}(t)), k(t)\rangle dt
\\&=J_3-J_2,
\end{array}
\end{equation}
where
\begin{equation}\label{eq:5.7}
\begin{array}{ll}
J_3=E\displaystyle\int_0^T\bigg[H(t,y(t),q(t),z(t),u(t),k(t))
-H(t,\bar y(t), \bar q(t), \bar z(t),\bar u(t), k(t))\bigg]dt.
\end{array}
\end{equation}
Since  $H(t,y,q,z,u,{k}(t))$ is
convex w.r.t. $(y,q,z,u)$ for almost all $(t,\omega)\in [0, T]\times
\Omega$, it turns out that
\begin{eqnarray}\label{tz3}
  \begin{split}
    &H(t,y(t),q(t),z(t),u(t),{k}(t))
-H(t,\bar{y}(t),\bar{q}(t),\bar{z}(t),\bar{u}(t),{k}(t))
\\\geq&\langle H_y(t,\bar{y}(t),\bar{q}(t),\bar{z}(t),\bar{u}(t),{k}(t)),
y(t)-\bar{y}(t)\rangle
\\&+\sum_{i=1}^d\langle H_q^i(t,\bar{y}(t),\bar{q}(t),\bar{z}(t),\bar{u}(t),{k}(t)),q^i(t)-\bar{q}^i(t)\rangle \\
&+\sum_{i=1}^\infty\langle
H_z^i(t,\bar{y}(t),\bar{q}(t),\bar{z}(t),\bar{u}(t),{k}(t)),
z^i(t)-\bar{z}^i(t)\rangle \\&+\langle
H_u(t,\bar{y}(t),\bar{q}(t),\bar{z}(t),\bar{u}(t),{k}(t)),
u(t)-\bar{u}(t)\rangle,\ \ a.s.\ a.e.
  \end{split}
\end{eqnarray}

On the other hand, for almost all $(t, \omega)\in [0, T]\times
\Omega$, $u\rightarrow
  H(t,\bar{y}(t),\bar{q}(t),\bar{z}(t),u,k(t))$ takes its
minimal value at $\bar{u}(t)$ in the domain $U$, thus
\begin{equation} \label{eq:5.9}
\begin{array}{ll}
\langle
H_u(t,\bar{y}(t),\bar{q}(t),\bar{z}(t),\bar{u}(t),k(t)),u(t)-\bar{u}(t)\rangle
\geq 0,\ \ a.s.\ a.e.
\end{array}
\end{equation}
Therefore, by \eqref{eq:5.7}--\eqref{eq:5.9} we first have
\begin{equation} \label{eq:5.11}
\begin{array}{ll}
J_3\geq J_1.
\end{array}
\end{equation}
By \eqref{eq:5.11}, together with \eqref{eq:5.10}--\eqref{eq:5.6},
it follows that
$$
\begin{array}{ll}
J(u(\cdot))-J(\bar{u}(\cdot))=I_1+I_2=(J_3-J_2)+(-J_1+J_2)\geq
(J_1-J_2)+(-J_1+J_2)=0.
\end{array}
$$
Due to the arbitrariness of $u(\cdot)$, we conclude that $\bar
u(\cdot)$ is an optimal control process and thus $(\bar{u}(\cdot);
\bar{y}(\cdot), \bar{q}(\cdot), \bar{z}(\cdot))$ is an optimal pair.
\end{proof}
\vspace{1mm}

\section{Applications in  BLQ problems}

In this section, we will apply our stochastic maximum principle to
the so-called BLQ problem, i.e. minimize the following quadratic
cost functional over $u(\cdot) \in \cal A$:
\begin{eqnarray}\label{eq:6.1}
\begin{split}
J(u(\cdot)):=&E\langle My(0),y(0)\rangle
+E\displaystyle\int_0^T\langle E(s)y(s),y(s)\rangle ds
+\sum_{i=1}^dE\displaystyle\int_0^T\langle
F^i(s)q^i(s),q^i(s)\rangle ds\\&+ \sum_{i=1}^\infty
E\displaystyle\int_0^T\langle G^i(s)z^i(s),z^i(s)\rangle ds
+E\displaystyle\int_0^T\langle N(s)u(s),u(s)\rangle ds,
\end{split}
\end{eqnarray}
where the state processes $(y(\cdot), q(\cdot), z(\cdot))$ are the
solution to the controlled linear backward stochastic system as
follows:
\begin{numcases}{}\label{eq:6.2}
dy(t)=-\bigg[A(t)y(t)+\displaystyle\sum_{i=1}^dB^i(t)q^i(t)
+\displaystyle\sum_{i=1}^\infty C^{i}(t)z^i(t)+D(t)u(t)\bigg]dt\nonumber\\
\ \ \ \ \ \ \ \ \ \ \ +\displaystyle\sum_{i=1}^dq^idW^i(t)+\displaystyle\sum_{i=1}^\infty z^idH^{i}(t)\\
y(T)=\xi.\nonumber
\end{numcases}

To study this problem, we need the assumptions on the coefficients
below.
\begin{ass}\label{ass:5.1}
The $\{{\mathscr{F}}_t,0\leq t\leq T\}$-predictable matrix processes
$A:[0,T]\times \Omega \rightarrow \mathbb{R}^{n\times n},
B^i:[0,T]\times \Omega\rightarrow \mathbb{R}^{n\times
n},i=1,2,\cdots,d, C^i:[0,T]\times \Omega \rightarrow
\mathbb{R}^{n\times n},i=1,2,\cdots, D:[0,T]\times \Omega\rightarrow
\mathbb{R}^{n\times m}, E:[0,T]\times \Omega \rightarrow
\mathbb{R}^{n\times n}, F^i:[0,T]\times \Omega\rightarrow
\mathbb{R}^{n\times n}, i=1,2,\cdots d, G^i:[0,T]\times \Omega
\rightarrow \mathbb{R}^{n\times n}, i=1,2,\cdots, N:[0,T]\times
\Omega \rightarrow \mathbb{R}^{m\times m}$ and the
${\mathscr{F}}_T$-measurable random matrix $M:\Omega\rightarrow
\mathbb{R}^{n\times n}$ are uniformly bounded.
\end{ass}
\begin{ass}\label{ass:5.2}
The state weighting  matrix processes $E$, $F^i$, $G^i$, the control
weighting matrix process $N$ and the random matrix $M$ are a.e.
a.s. symmetric and nonnegative. Moreover, $N$ is a.e. a.s. uniformly positive, i.e. $ N\geq
\delta I $ for some positive constant
$\delta$ a.e. a.s. 
\end{ass}
\begin{ass}\label{ass:5.3}
There is no further constraint imposed on the control processes,
i.e. 
$$\cal A=\bigg\{u(\cdot)|u(\cdot)\ is\ \mathscr{F}_t-predictable\ with\ values\ in\ \mathbb{R}^{m}\ and\ E\displaystyle\int_0^T|u(t)|^2dt< \infty \}.$$
\end{ass}

From Assumption \ref{ass:5.3}, we know that $\cal A$ is a Hilbert
space. If we denote the norm of $\cal A$ by $\|\cdot\|_{\cal A}$,
then for any control process $u(\cdot)\in \cal A$,
$\|u(\cdot)\|_{\cal A}=E\displaystyle\sqrt{\int_0^T|u(t)|^2dt}$.

Under Assumptions \ref{ass:5.1},  by Lemma \ref{lem:1.3} we first
know that the linear BSDE (\ref{eq:6.2}) in BLQ problem has a unique
solution and thus the BLQ problem is well-defined. Then, under Assumptions \ref{ass:5.1}-\ref{ass:5.3}, we will
demonstrate that BLQ problem has a unique optimal control.
\begin{lem}\label{lem:b4} Under Assumptions
\ref{ass:5.1}-\ref{ass:5.3}, the cost functional $J$ is strictly
convex over $\cal A$ and $\displaystyle\lim_ {\|u(\cdot)\|_{\cal
A}{\rightarrow \infty}}J(u(\cdot))=\infty.$
\end{lem}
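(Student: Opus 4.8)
The plan is to exploit the affine dependence of the state triple $(y,q,z)$ on the control together with the strict positivity of the control weight $N$. First I would record the key linearity property: given $u_1(\cdot),u_2(\cdot)\in\mathcal A$ with associated state processes $(y_1,q_1,z_1)$, $(y_2,q_2,z_2)$, and $\lambda\in[0,1]$, the state process associated with $u_\lambda:=\lambda u_1+(1-\lambda)u_2$ is exactly $(y_\lambda,q_\lambda,z_\lambda)=\lambda(y_1,q_1,z_1)+(1-\lambda)(y_2,q_2,z_2)$. Indeed, since \eqref{eq:6.2} is linear in $(y,q,z,u)$ and both controls share the same terminal datum $\xi$, the convex combination $\lambda(y_1,q_1,z_1)+(1-\lambda)(y_2,q_2,z_2)$ solves \eqref{eq:6.2} with control $u_\lambda$ and terminal value $\lambda\xi+(1-\lambda)\xi=\xi$, so it coincides with $(y_\lambda,q_\lambda,z_\lambda)$ by the uniqueness part of Lemma \ref{lem:1.3}.

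For the strict convexity I would then apply, term by term in \eqref{eq:6.1}, the elementary identity valid for any symmetric matrix $S$,
\[
\lambda\langle Sv_1,v_1\rangle+(1-\lambda)\langle Sv_2,v_2\rangle-\langle S(\lambda v_1+(1-\lambda)v_2),\lambda v_1+(1-\lambda)v_2\rangle=\lambda(1-\lambda)\langle S(v_1-v_2),v_1-v_2\rangle,
\]
used with $S=M$ at time $0$ and with $S=E(s),F^i(s),G^i(s),N(s)$ under the time integral (the interchange of the sums over $i$ with expectation and integration being justified by the uniform boundedness of the weights in Assumption \ref{ass:5.1} and $(q,z)\in M_{\mathscr F}^2(0,T;\mathbb R^{n\times d})\times l_{\mathscr F}^2(0,T;\mathbb R^n)$). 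By Assumption \ref{ass:5.2} the $M$-, $E$-, $F^i$- and $G^i$-contributions are nonnegative and the $N$-contribution is bounded below using $N\geq\delta I$, whence
\[
\lambda J(u_1(\cdot))+(1-\lambda)J(u_2(\cdot))-J(u_\lambda(\cdot))\;\geq\;\lambda(1-\lambda)\,\delta\,E\!\int_0^T|u_1(t)-u_2(t)|^2\,dt .
\]
Since $u_1(\cdot)\neq u_2(\cdot)$ in $\mathcal A$ means precisely $E\int_0^T|u_1(t)-u_2(t)|^2dt>0$, the right-hand side is strictly positive for $\lambda\in(0,1)$, proving strict convexity.

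For the coercivity I would simply discard the four nonnegative terms in \eqref{eq:6.1}, obtaining $J(u(\cdot))\geq\delta\,E\int_0^T|u(t)|^2dt$, and then pass to the norm of $\mathcal A$ via Jensen's inequality applied to $X=\sqrt{\int_0^T|u(t)|^2dt}\geq0$: $E\int_0^T|u(t)|^2dt=E[X^2]\geq(E[X])^2=\|u(\cdot)\|_{\mathcal A}^2$. Hence $J(u(\cdot))\geq\delta\|u(\cdot)\|_{\mathcal A}^2\to\infty$ as $\|u(\cdot)\|_{\mathcal A}\to\infty$. I do not expect a genuine obstacle here; the only points needing a little care are the legitimacy of exchanging the infinite sum over $i$ with the expectation and time-integral (covered by uniform boundedness of $F^i,G^i$ and $l^2$-integrability of $q,z$) and the remark, already noted above, that inequality of controls in $\mathcal A$ is equivalent to positivity of $E\int_0^T|u_1-u_2|^2dt$, so that the convexity obtained is truly strict.
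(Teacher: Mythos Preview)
Your proposal is correct and follows essentially the same approach as the paper: both arguments rest on the affine dependence of the state on the control, the nonnegativity of $M,E,F^i,G^i$, and the uniform positivity $N\ge\delta I$ to obtain strict convexity and the lower bound $J(u(\cdot))\ge\delta\,E\int_0^T|u(t)|^2\,dt$. The paper's proof is terse (it simply declares convexity ``obvious'' and writes the coercivity bound directly), whereas you spell out the convex-combination-of-states argument and the quadratic-form identity in full; your use of Jensen's inequality to pass from $E\int_0^T|u|^2\,dt$ to $\|u\|_{\mathcal A}^2$ is in fact more careful than the paper, which writes this step as an equality (a harmless slip, likely reflecting the intended norm $\sqrt{E\int_0^T|u|^2\,dt}$ rather than the stated $E\sqrt{\int_0^T|u|^2\,dt}$).
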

\begin{proof} The convexity of the cost functional $J$ over $\cal A$ is obvious. Actually, since
the weighting matrix process $N$ is uniformly positive, $J$ is
strictly convex. In view of  the nonnegative property  of $M, E,
F^i, G^i$ and the strictly positive property of $N$, we have
$$J(u(\cdot))\geq \delta E\displaystyle\int_0^T
|u(t)|^2dt=\delta\|u(\cdot)\|^2_{\cal A}.$$
Therefore, $\displaystyle\lim_ {\|u(\cdot)\|_{\cal A}{\rightarrow
\infty}}J(u(\cdot))=\infty.$
\end{proof}
\vspace{1mm}

\begin{lem}\label{lem:b5} Under Assumptions \ref{ass:5.1}-\ref{ass:5.3}, the cost
functional $J$ is Fr\'{e}chet differentiable over $\cal A$ and its
Fr\'{e}chet derivative $J'$ at any admissible control process
$u(\cdot)\in{\cal A}$ is given by
\begin{eqnarray}\label{tz2}
\begin{split}
\la J'(u(\cdot)), v(\cdot)\ra=&2E\int_0^T\langle E(t)y^u(t),
Y^v(t)\rangle dt+2\sum_{i=1}^dE\int_0^T\langle F^i(t)q^{iu}(t),
Q^{iv}(t)\rangle dt
\\&+2\sum_{i=1}^\infty E\int_0^T\langle G^i(t)z^{iu}(t), Z^{iv}(t)\rangle dt
+2E\int_0^T\langle N(t)u(t), v(t)\rangle dt
\\&+2E\langle My^u(0), Y^v(0)\rangle,   
\end{split}
\end{eqnarray}
where $v(\cdot)\in \cal A$ is arbitrary, $(Y^v, Q^v, Z^v)$ is  the
solution of BSDE \eqref{eq:6.2} corresponding to the control process
$v(\cdot)\in \cal A$ and the terminal value $0$, and $(y^u(\cdot),
q^u(\cdot), z^u(\cdot))$ are the state processes corresponding to
the control process $u(\cdot)$.
\end{lem}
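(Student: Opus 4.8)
The plan is to exploit the affine dependence of the state triple on the control together with the quadratic form of $J$, so that Fr\'{e}chet differentiability reduces to the a priori estimate \eqref{eq:1.5}. First I would record the structural facts. Since \eqref{eq:6.2} is linear in $(y,q,z,u)$, the map $v(\cdot)\mapsto(Y^v,Q^v,Z^v)$ (the solution of \eqref{eq:6.2} with control $v$ and terminal value $0$) is linear, and by the uniqueness part of Lemma \ref{lem:1.3} one has, for all $u,v\in{\cal A}$,
$$
y^{u+v}=y^u+Y^v,\qquad q^{u+v}=q^u+Q^v,\qquad z^{u+v}=z^u+Z^v .
$$
Moreover, applying \eqref{eq:1.5} to \eqref{eq:6.2} with control $v$ and terminal value $0$ (the free term is $D(\cdot)v(\cdot)$, with $D$ uniformly bounded by Assumption \ref{ass:5.1}) yields
$$
E\sup_{0\le t\le T}|Y^v(t)|^2+E\int_0^T|Q^v(t)|^2dt+E\int_0^T\|Z^v(t)\|^2_{l^2(\mathbb{R}^n)}dt\le K\,E\int_0^T|v(t)|^2dt=K\|v\|_{\cal A}^2 .
$$

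Next I would substitute the affine decomposition into \eqref{eq:6.1} and expand each quadratic form using the symmetry of $M,E,F^i,G^i,N$ (Assumption \ref{ass:5.2}), e.g. $\la M(a+b),a+b\ra=\la Ma,a\ra+2\la Ma,b\ra+\la Mb,b\ra$. This produces the decomposition
$$
J(u+v)-J(u)=L_u(v)+R(v),
$$
where $L_u(v)$ is exactly the right-hand side of \eqref{tz2} and $R(v)$ collects the purely quadratic-in-$v$ contributions
\begin{align*}
R(v)={}& E\la MY^v(0),Y^v(0)\ra+E\int_0^T\la E(t)Y^v(t),Y^v(t)\ra dt+\sum_{i=1}^dE\int_0^T\la F^i(t)Q^{iv}(t),Q^{iv}(t)\ra dt\\
& +\sum_{i=1}^\infty E\int_0^T\la G^i(t)Z^{iv}(t),Z^{iv}(t)\ra dt+E\int_0^T\la N(t)v(t),v(t)\ra dt .
\end{align*}

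It then remains to verify (i) that $v\mapsto L_u(v)$ is a bounded linear functional on ${\cal A}$, and (ii) that $R(v)=o(\|v\|_{\cal A})$ as $\|v\|_{\cal A}\to0$. For (i), linearity is inherited from the linearity of $v\mapsto(Y^v,Q^v,Z^v)$; boundedness follows term by term from the Cauchy--Schwarz inequality, the uniform boundedness of the weighting matrices, and the a priori bound above, e.g. $|E\int_0^T\la E(t)y^u(t),Y^v(t)\ra dt|\le C\big(E\int_0^T|y^u|^2dt\big)^{1/2}\big(E\int_0^T|Y^v|^2dt\big)^{1/2}\le C'\|v\|_{\cal A}$, with the terminal term handled by the $S^2_{\mathscr F}$-bound on $Y^v$ and the series in $i$ closed by one further application of Cauchy--Schwarz after summation. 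For (ii), the same matrix bounds and the a priori estimate give $|R(v)|\le K\,E\int_0^T|v(t)|^2dt=K\|v\|_{\cal A}^2=o(\|v\|_{\cal A})$. Hence $J$ is Fr\'{e}chet differentiable at every $u\in{\cal A}$ with $\la J'(u),v\ra=L_u(v)$, which is precisely \eqref{tz2}. The argument is essentially bookkeeping; the only points requiring mild care are the handling of the infinite Teugel series in $i$ — controlled because \eqref{eq:1.5} bounds $E\int_0^T\|Z^v(t)\|^2_{l^2(\mathbb{R}^n)}dt=E\int_0^T\sum_{i\ge1}|Z^{iv}(t)|^2dt$ as a single quantity — and keeping track of the ${\cal A}$-norm when converting $E\int_0^T|v(t)|^2dt$ into $\|v\|_{\cal A}^2$, so I do not expect any genuine obstacle.
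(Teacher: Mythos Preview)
Your proposal is correct and follows essentially the same approach as the paper: decompose $J(u+v)-J(u)$ into the claimed linear part plus a quadratic remainder $R(v)$, then use the uniform boundedness of the weighting matrices together with the a priori estimate \eqref{eq:1.5} to bound $|R(v)|\le K\|v\|_{\cal A}^2$, which gives Fr\'{e}chet differentiability. The paper is slightly more terse (it does not spell out the affine identity $y^{u+v}=y^u+Y^v$ nor separately verify that $L_u$ is bounded linear), but the argument is the same.
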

\begin{proof} For any $v(\cdot)\in \cal A$, we set
\begin{eqnarray*}
  \begin{split}
\Delta J=&J(u(\cdot)+v(\cdot))-J(u(\cdot))-2E\int_0^T\langle
E(t)y^u(t), Y^v(t)\rangle dt\\&-2\sum_{i=1}^dE\int_0^T\langle
F^i(t)q^{iu}(t), Q^{iv}(t)\rangle dt -2\sum_{i=1}^\infty
E\int_0^T\langle G^i(t)z^{iu}(t), Z^{iv}(t)\rangle dt
\\&-2E\int_0^T\langle N(t)u(t), v(t)\rangle dt-2E\langle My^u(0), Y^v(0)\rangle.
  \end{split}
\end{eqnarray*}
By the definition of cost functional \eqref{eq:6.1}, we have
\begin{eqnarray*}
\begin{split}
\Delta J =&E\langle MY^v(0),Y^v(0)\rangle
+E\displaystyle\int_0^T\langle E(s)Y^v(s),Y^v(s)\rangle ds
+\sum_{i=1}^dE\displaystyle\int_0^T\langle
F^i(s)Q^{iv}(s),Q^{iv}(s)\rangle ds\\&+\sum_{i=1}^\infty
E\displaystyle\int_0^T\langle G^i(s)Z^{iv}(s),Z^{iv}(s)\rangle
ds+E\displaystyle\int_0^T\langle N(s)v(s),v(s)\rangle ds.
\end{split}
\end{eqnarray*}
Then it follows from  Assumption \ref{ass:5.1} and a priori estimate
\eqref{eq:1.5} that
\begin{eqnarray*}
|\Delta J|&\leq&K\bigg[E\sup_{0\leq t\leq
T}|Y^v(t)|^2+E\int_0^T|Q^v(t)|^2dt+E\int_0^T\|Z^v(t)\|^2_{l^2(\mathbb{R}^N)}dt+E\int_0^T|v(t)|^2dt\bigg]\nonumber\\
&\leq&KE\int_0^T|v(t)|^2dt=K\|v(\cdot)\|^2_{\cal A}.
\end{eqnarray*}
Consequently, we have
$$\displaystyle\lim_ {\|v(\cdot)\|_{\cal A}
{\rightarrow 0}}\frac{|\Delta J|}{\|v(\cdot)\|_{\cal A}}=0,$$
which implies that $J$ is Fr\'{e}chet differentiable and its
Fr\'{e}chet derivative $J'$ is given by \eqref{tz2}.
\end{proof}
\vspace{1mm}

The strict convexity and the Fr\'{e}chet differentiability of $J$
deduced from Lemmas \ref{lem:b4}-\ref{lem:b5} lead to the lower
semi-continuity of $J$, thus the following lemma is applicable to
$J$ and $\cal A$ in our BLQ problem.
\begin{lem}\label{tz4} (Proposition 1.2 of Chapter II in \cite{EkTe76})
Let $\cal A$ be a reflexive Banach space and $J:\cal
A\mapsto\mathbb{R}^1$ be a convex function. Assume that $J$ is lower
semi-continuous and proper, and consider the minimization problem
$$\inf_{u\in \cal A} J(u).$$
If the function $J$ is coercive over $\cal A$, i.e.
$$\lim_{\|u\|_{\cal A}\to\infty}J(u)=\infty,$$
then the minimization problem has at least one solution. Moreover,
if $J$ is strictly convex over $\cal A$, then the minimization
problem has a unique solution.
\end{lem}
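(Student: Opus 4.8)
The plan is to run the classical direct method of the calculus of variations. Since ${\cal A}$ is nonempty (it contains $0$) and $J$ is proper, the value $m:=\inf_{u\in{\cal A}}J(u)$ satisfies $m<+\infty$. First I would select a minimizing sequence $\{u_n\}\subset{\cal A}$ with $J(u_n)\to m$. Coercivity forces this sequence to be bounded in ${\cal A}$: if some subsequence had $\|u_{n_k}\|_{\cal A}\to\infty$, then $J(u_{n_k})\to+\infty$, contradicting $J(u_{n_k})\to m<+\infty$.

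Because ${\cal A}$ is reflexive, every bounded sequence admits a weakly convergent subsequence, so after passing to a subsequence we may assume $u_n\to\bar u$ weakly for some $\bar u\in{\cal A}$. The key step is to promote the (strong) lower semicontinuity of $J$ to weak lower semicontinuity. This is where convexity enters: for every $\lambda\in\mathbb{R}^1$ the sublevel set $\{u\in{\cal A}:J(u)\le\lambda\}$ is convex and strongly closed, hence weakly closed by Mazur's lemma (an application of the Hahn--Banach separation theorem). Equivalently, $J$ is weakly sequentially lower semicontinuous, so $J(\bar u)\le\liminf_n J(u_n)=m$. Since $m$ is the infimum, $J(\bar u)=m$, so $\bar u$ solves the minimization problem; in particular $m>-\infty$.

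For uniqueness under strict convexity, suppose $u_1,u_2\in{\cal A}$ with $u_1\ne u_2$ were both minimizers. Then $\tfrac12(u_1+u_2)\in{\cal A}$ and strict convexity yields $J\big(\tfrac12(u_1+u_2)\big)<\tfrac12 J(u_1)+\tfrac12 J(u_2)=m$, contradicting the definition of $m$. Hence the minimizer is unique.

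The main obstacle is the weak lower semicontinuity step: strong lower semicontinuity alone is not preserved under weak limits, and one genuinely needs convexity via Mazur's lemma (that strongly closed convex sets in a Banach space are weakly closed). The remaining ingredients --- extracting a minimizing sequence, boundedness from coercivity, weak sequential compactness from reflexivity, and the midpoint argument for uniqueness --- are routine.
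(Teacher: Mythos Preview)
Your argument is correct and is precisely the classical direct method of the calculus of variations: coercivity gives boundedness of a minimizing sequence, reflexivity yields a weakly convergent subsequence, convexity upgrades strong lower semicontinuity to weak sequential lower semicontinuity via Mazur's lemma, and strict convexity forces uniqueness by the midpoint argument.

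Note, however, that the paper does not supply its own proof of this lemma; it is quoted verbatim as Proposition~1.2 of Chapter~II in Ekeland--T\'emam \cite{EkTe76} and invoked as a black box to obtain Theorem~\ref{them:b1}. Your proposal is exactly the proof one finds in that reference, so there is nothing to compare: you have filled in what the paper deliberately outsourced.
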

By Lemma \ref{tz4} we can immediately conclude 
\begin{thm}\label{them:b1}
Under Assumptions \ref{ass:5.1}-\ref{ass:5.3}, BLQ problem has a
unique optimal control. \end{thm}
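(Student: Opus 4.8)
The plan is to obtain Theorem \ref{them:b1} as a direct application of the abstract existence-and-uniqueness result Lemma \ref{tz4}, after verifying its hypotheses for the pair $(\mathcal{A}, J)$ of the BLQ problem. First I would record the structural requirement: under Assumption \ref{ass:5.3}, $\mathcal{A}$ is the space of $\mathscr{F}_t$-predictable $\mathbb{R}^m$-valued processes $u(\cdot)$ with $E\int_0^T|u(t)|^2\,dt<\infty$, which is a Hilbert space (a closed subspace of $L^2([0,T]\times\Omega;\mathbb{R}^m)$) and hence in particular a reflexive Banach space, so the ambient-space condition of Lemma \ref{tz4} holds.

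Next I would check that $J$ is proper, convex and lower semi-continuous on $\mathcal{A}$. Properness is immediate: by Assumption \ref{ass:5.2} all the weighting data $M,E,F^i,G^i,N$ are nonnegative, so $J(u(\cdot))\geq 0$, while the a priori estimate \eqref{eq:1.5} combined with the uniform boundedness in Assumption \ref{ass:5.1} shows $J(u(\cdot))<\infty$ for every $u(\cdot)\in\mathcal{A}$; thus $J$ is finite-valued. Convexity (in fact strict convexity) is exactly the content of Lemma \ref{lem:b4}. For lower semi-continuity I would invoke Lemma \ref{lem:b5}: since $J$ is Fr\'echet differentiable on $\mathcal{A}$ it is continuous, and a finite continuous convex function is a fortiori lower semi-continuous (and, being convex and continuous, also weakly lower semi-continuous, which is the form actually used inside the proof of Lemma \ref{tz4}).

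Finally I would verify coercivity. By Lemma \ref{lem:b4}, using the uniform positivity $N\geq\delta I$ from Assumption \ref{ass:5.2} and discarding the remaining nonnegative terms in \eqref{eq:6.1}, one gets $J(u(\cdot))\geq\delta\|u(\cdot)\|_{\mathcal{A}}^2$, so $\lim_{\|u(\cdot)\|_{\mathcal{A}}\to\infty}J(u(\cdot))=\infty$. With every hypothesis of Lemma \ref{tz4} in place, the minimization problem $\inf_{u(\cdot)\in\mathcal{A}}J(u(\cdot))$ has at least one solution, and the strict convexity of $J$ (again Lemma \ref{lem:b4}) promotes this to a \emph{unique} solution $\bar{u}(\cdot)\in\mathcal{A}$, i.e.\ a unique optimal control for the BLQ problem. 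I do not expect any genuine obstacle here: the analytic substance — the BSDE a priori estimates, strict convexity forced by the positive-definite control weight $N$, and Fr\'echet differentiability of the cost — has already been established in Lemmas \ref{lem:b4}--\ref{lem:b5}, so the only remaining work is the bookkeeping of matching these facts to the conditions of the cited functional-analytic lemma.
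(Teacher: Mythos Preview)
Your proposal is correct and follows essentially the same route as the paper: the authors also deduce Theorem \ref{them:b1} directly from Lemma \ref{tz4}, after noting that Lemmas \ref{lem:b4}--\ref{lem:b5} supply the strict convexity, coercivity and (via Fr\'echet differentiability) lower semi-continuity of $J$ on the Hilbert space $\mathcal{A}$. Your write-up simply makes the verification of each hypothesis of Lemma \ref{tz4} more explicit than the paper does.
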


In what follows, we will utilize the stochastic maximum principle to
study the dual representation of the optimal control to BLQ problem
and construct its stochastic Hamilton system. As in  section 4, we
first introduce the adjoint forward equation corresponding to an
admissible pair $(u(\cdot);
 y(\cdot),q(\cdot),z(\cdot))$:
\begin{numcases}{}\label{eq:7.6}
dk(t)=\bigg(A^*(t)k(t)-2E(t)y(t)\bigg)dt
+\displaystyle\sum_{i=1}^d\bigg(B^{i*}(t)k^i(t)-2F^i(t)q^i(t)\bigg)dW^i(t)\nonumber\\
\ \ \ \ \ \ \ \ \ \ \ \ +\displaystyle\sum_{i=1}^\infty
\bigg(C^{i*}(t)k(t)-2G^{i}(t)z^i(t)\bigg)dH^{i}(t)\\
k(0)={-2My(0)}.\nonumber
\end{numcases}
Also we define the Hamiltonian function $H:[0,T]\times
\Omega\times\mathbb{R}^n\times\mathbb{R}^{n\times d}\times
l^2(\mathbb{R}^n)\times U\times\mathbb{R}^n\rightarrow \mathbb{R}^1$
by
\begin{eqnarray}\label{eq:6.7}
\displaystyle H(t,y,q,z,u,k)&=&-\bigg\langle k,
A(t)y+\displaystyle\sum_{i=1}^dB^i(t)q^i+\displaystyle\sum_{i=1}^\infty
C^{i}(t)z^i+D(t)u\bigg\rangle\\
&&+\langle E(t)y, y\rangle +\displaystyle\sum_{i=1}^d\langle
F^i(t)q^i,q^i\rangle +\displaystyle\sum_{i=1}^\infty \langle
G^i(t)z^i,z^i\rangle +\langle N(t)u, u\rangle.\nonumber
\end{eqnarray}
Then the adjoint equation can be rewritten as a Hamiltonian form:
\begin{numcases}{}\label{5.8}
dk(t)=-H_y(t, {y}(t), {q}(t), {z}(t),
{u}(t),k(t))dt-\displaystyle\sum_{i=1}^d H_q^{i} (t, {y}(t), {q}(t),
{z}(t), {u}(t),k(t))dB^i(t)\nonumber\\
\ \ \ \ \ \ \ \ \ \ \ -\displaystyle\sum_{i=1}^\infty
H_{z^i}(t, {y}(t-), {q}(t), {z}(t), {u}(t),k(t))dH^i(t)\\
k(0)=-2My(0).\nonumber
\end{numcases}
Under Assumption \ref{ass:5.1}, for each admissible pair
$({u}(\cdot); {y}(\cdot), {q}(\cdot), {z}(\cdot))$, by Lemma
\ref{lem:1.1} the adjoint equation (\ref{5.8}) has a unique solution
$k(\cdot)$.

It is time to give the  the dual characterization of the optimal
control.
\begin{thm}\label{thm:b2}
Under Assumptions \ref{ass:5.1}-\ref{ass:5.3}, BLQ problem has a
unique optimal control and the optimal control is given by
\begin{eqnarray} \label{eq:6.9}
  \begin{split}
    u(t) =-\frac{1}{2}N^{-1}(t)D^*(t)k(t-),\ \ a.e.\ a.s.,
     \end{split}
\end{eqnarray}
where $k(\cdot)$ is the unique solution of the adjoint equation
\eqref{eq:7.6} (or equivalently, \eqref{5.8}) corresponding to the
optimal pair $(u(\cdot); y(\cdot), q(\cdot),z(\cdot))$.
\end{thm}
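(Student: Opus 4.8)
The plan is to derive the explicit formula \eqref{eq:6.9} as a direct corollary of the necessary optimality condition (Theorem with the variational inequality \eqref{eq:44}) applied to the BLQ data, and then to verify via the sufficiency theorem that this control is indeed the optimal one — existence and uniqueness being already guaranteed by Theorem \ref{them:b1}. First I would specialize the Hamiltonian \eqref{eq:6.7} and compute its gradient in $u$: since $H$ depends on $u$ only through the two terms $-\langle k, D(t)u\rangle$ and $\langle N(t)u,u\rangle$, one gets $H_u(t,y,q,z,u,k) = -D^*(t)k + 2N(t)u$. The necessary condition \eqref{eq:44}, namely $\langle H_u(t,\bar y(t),\bar q(t),\bar z(t),\bar u(t),k(t)),\, u-\bar u(t)\rangle \geq 0$ for all $u\in U = \mathbb{R}^m$ (Assumption \ref{ass:5.3}, no control constraint), forces the gradient to vanish: $-D^*(t)k(t) + 2N(t)\bar u(t) = 0$ a.e.\ a.s. Since $N$ is uniformly positive by Assumption \ref{ass:5.2}, $N^{-1}$ exists and is bounded, so we may solve $\bar u(t) = \tfrac12 N^{-1}(t) D^*(t) k(t)$; the sign in \eqref{eq:6.9} comes from the sign convention in the adjoint equation \eqref{eq:7.6} where $k(0) = -2My(0)$ and the drift carries $A^*k$ rather than $-A^*k$ — I would reconcile \eqref{eq:7.6} with the Hamiltonian form \eqref{5.8} to confirm that $H_u = -D^*(t)k + 2N(t)u$ produces exactly the stated formula, writing $k(t-)$ for the predictable version.

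Next I would check the admissibility of the candidate: by Assumption \ref{ass:5.1} the matrices $D, N^{-1}$ are uniformly bounded, and $k(\cdot) \in S^2_{\mathscr{F}}(0,T;\mathbb{R}^n)$ by Lemma \ref{lem:1.1}, so $u(\cdot) = -\tfrac12 N^{-1} D^* k(\cdot-)$ is $\mathscr{F}_t$-predictable and square-integrable, hence lies in $\cal A$. Then I would invoke the sufficiency theorem of Section 4: the Hamiltonian \eqref{eq:6.7} is convex in $(y,q,z,u)$ because $E, F^i, G^i, N$ are all nonnegative (Assumption \ref{ass:5.2}) — the quadratic form $\langle E y,y\rangle + \sum\langle F^iq^i,q^i\rangle + \sum\langle G^iz^i,z^i\rangle + \langle Nu,u\rangle$ is convex and the remaining terms are linear — and $\phi(y) = \langle My,y\rangle$ is convex since $M\geq 0$. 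Moreover, with $\bar u(t)$ as above we have $H_u(t,\bar y,\bar q,\bar z,\bar u,k) = 0$, and convexity of $H$ in $u$ then gives that $u\mapsto H(t,\bar y(t),\bar q(t),\bar z(t),u,k(t))$ attains its minimum over $U=\mathbb{R}^m$ at $\bar u(t)$. All hypotheses of the sufficiency theorem are met, so $(\bar u(\cdot);\bar y(\cdot),\bar q(\cdot),\bar z(\cdot))$ is an optimal pair, and by Theorem \ref{them:b1} it is the unique one.

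The one genuinely delicate point — the ``hard part'' — is the self-consistency of the coupled system: the formula \eqref{eq:6.9} expresses $u$ in terms of $k$, while $k$ solves the SDE \eqref{eq:7.6} whose coefficients involve $(y,q,z)$, and $(y,q,z)$ solves the BSDE \eqref{eq:6.2} whose driver contains $D(t)u(t)$. Substituting \eqref{eq:6.9} into \eqref{eq:6.2} turns the pair \eqref{eq:6.2}--\eqref{eq:7.6} into a fully coupled linear forward–backward SDE driven by $W$ and the Teugel martingales. For the theorem as stated I do not need to solve this FBSDE from scratch: the existence and uniqueness of the optimal control is already in hand (Theorem \ref{them:b1}), and the optimal pair together with its adjoint process automatically gives a solution of the Hamilton system, so the stochastic Hamilton system is \emph{characterized} rather than independently solved. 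I would therefore close the proof by remarking that \eqref{eq:6.2}, \eqref{eq:7.6} and \eqref{eq:6.9} together constitute the stochastic Hamilton system whose (unique, by the optimization argument) solution yields the optimal control, and I would flag — without proving — that the well-posedness of this FBSDE is equivalent to the solvability of the associated backward stochastic Riccati equation, referring to \cite{MiTa08}.
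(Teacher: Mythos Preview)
Your proposal is correct and follows essentially the paper's own argument: invoke Theorem~\ref{them:b1} for existence and uniqueness, then apply the necessary condition \eqref{eq:44} with $U=\mathbb{R}^m$ (Assumption~\ref{ass:5.3}) to force $H_u=0$ and solve for $u$ using the invertibility of $N$. The paper stops there; your further appeal to the sufficiency theorem, the admissibility check for $-\tfrac12 N^{-1}D^*k(\cdot-)$, and the discussion of the coupled FBSDE are all sound but redundant for \emph{this} theorem, since once the unique optimum is known to exist the necessary condition alone already identifies it (the FBSDE well-posedness is the content of the next theorem in the paper, and no Riccati equation is invoked).
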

\begin{proof} By Theorem \ref{them:b1}, we know the existence and
uniqueness of optimal control to BLQ problem and denote the optimal
control by $u(\cdot)$. We only need to prove $u$ has an expression
as in (\ref{eq:6.9}). For this, let $( y(\cdot), q(\cdot),z(\cdot))$
be the optimal state processes corresponding to $u(\cdot)$ and
$k(\cdot)$ be the unique solution of the adjoint equation
\eqref{5.8} corresponding to the optimal pair $(u(\cdot); y(\cdot),
q(\cdot),z(\cdot))$. By the necessary optimality condition
\eqref{eq:44} and Assumption \ref{ass:5.3}, we have
$$H_u(t,y(t-),q(t),z(t), u(t), k(t-))=0,\ \ a.e.\ a.s.$$
Noticing the definition of $H$ in \eqref{eq:6.7}, we get
$$2N(t)u(t)+D^{*}(t)k(t-)=0,\ \ a.e.\ a.s.$$
Then the claim that the unique optimal control $u(\cdot)$ satisfies
\eqref{eq:6.9} follows.
\end{proof}
\vspace{1mm}

Finally we introduce the so-called stochastic Hamilton system which
consists of the state equation \eqref{eq:6.2}, the adjoint equation
\eqref{eq:7.6} (or equivalently, \eqref{5.8}) and the dual
representation \eqref{eq:6.9}:
\begin{numcases}{}\label{eq:f333}
dy(t)=-\bigg(A(t)y(t)+\sum_{i=1}^dB^i(t)q^i(t) +\sum_{i=1}^\infty
C^{i}(t)z^i(t)+D(t)u(t)\bigg)dt\nonumber\\
\ \ \ \ \ \ \ \ \ \ \
+\displaystyle\sum_{i=1}q^idW^i(t)+\displaystyle\sum_{i=1}^\infty
z^idH^{i}(t)\nonumber\\
y(T)=\xi,\nonumber\\
dk(t)=\bigg(A^*(t)k(t)-2E(t)y(t)\bigg)dt
+\displaystyle\sum_{i=1}^d\bigg(B^{i*}(t)k^i(t)-2F^i(t)q^i(t)\bigg)dW^i(t)\nonumber\\
\ \ \ \ \ \ \ \ \ \ \ \ +\displaystyle\sum_{i=1}^\infty
 \bigg(C^{i*}(t)k(t)-2G^{i}(t)z^i(t)\bigg)dH^{i}(t)\\
k(0)=-2M y(0),\nonumber\\
u_t=-\frac{1}{2}N^{-1}(t)D^*(t)k(t-).\nonumber
\end{numcases}
Clearly this is a fully coupled  forward-backward stochastic
differential equation (FBSDE) driven by $d$-dimensional Brownian
motion $W$ and Teugel's martingales $\{H^i\}_{i=1}^\infty$, and its
solution is a stochastic processes quaternary $(k(\cdot), y(\cdot),
q(\cdot), z(\cdot))$.

\begin{thm} Under Assumptions \ref{ass:5.1}-\ref{ass:5.3}, the
stochastic Hamilton system \eqref{eq:f333} has a unique solution
$(k(\cdot), y(\cdot), q(\cdot), z(\cdot))\in S_{\mathscr{F}}^2(0,
T;\mathbb{R}^n)\times S_{\mathscr{F}}^2(0, T;\mathbb{R}^n)\times
M_{\mathscr{F}}^2(0,T; \mathbb{R}^{n\times d}) \times
l_{\mathscr{F}}^2(0,T;\mathbb{R}^n)$, where $u(\cdot)$ is the
optimal control of BLQ problem and $(y(\cdot), q(\cdot),
z(\cdot))$ are its corresponding optimal state. Moreover, 
\begin{eqnarray}
\label{eq:f334}
\begin{split}
&\displaystyle E\sup_{0\leqslant t\leqslant
T}|k(t)|^2+E\displaystyle\sup_{0\leq t\leq T}|y(t)|^2
+E\int_0^T|q(t)|^2dt+E\int_0^T||z(t)||^2_{l^2({\mathbb{R}^n})}dt\leqslant
K E{{|\xi|^2}}.
\end{split}
\end{eqnarray}
\end{thm}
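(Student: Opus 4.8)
The plan is to read off both existence and uniqueness for the fully coupled system \eqref{eq:f333} from the optimal control theory already in place, rather than attempting to solve this FBSDE directly. For existence, I would take the unique optimal control $\bar u(\cdot)$ supplied by Theorem \ref{them:b1}, solve the controlled backward state equation \eqref{eq:6.2} with $u=\bar u$ to get $(\bar y(\cdot),\bar q(\cdot),\bar z(\cdot))\in S_{\mathscr{F}}^2(0,T;\mathbb{R}^n)\times M_{\mathscr{F}}^2(0,T;\mathbb{R}^{n\times d})\times l_{\mathscr{F}}^2(0,T;\mathbb{R}^n)$ by Lemma \ref{lem:1.3}, and then solve the adjoint forward equation \eqref{eq:7.6} (equivalently \eqref{5.8}) for the optimal pair to get $k(\cdot)\in S_{\mathscr{F}}^2(0,T;\mathbb{R}^n)$ by Lemma \ref{lem:1.1}. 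By Theorem \ref{thm:b2} the optimal control is recovered as $\bar u(t)=-\frac{1}{2}N^{-1}(t)D^*(t)k(t-)$, so the quadruple $(k,\bar y,\bar q,\bar z)$ satisfies every line of \eqref{eq:f333}, including the feedback line, and belongs to the asserted space; this identifies $u(\cdot)=\bar u(\cdot)$ as the optimal control and $(\bar y,\bar q,\bar z)$ as the corresponding optimal state.

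For uniqueness, I would take an arbitrary solution $(k,y,q,z)$ of \eqref{eq:f333} in the stated class and put $u(t):=-\frac{1}{2}N^{-1}(t)D^*(t)k(t-)$; since $k\in S_{\mathscr{F}}^2$ and $N^{-1},D$ are bounded, $u(\cdot)\in{\cal A}$, so $(u;y,q,z)$ is an admissible pair and $k$ is exactly the solution of its adjoint equation \eqref{5.8}. The feedback line is precisely the first-order stationarity $H_u(t,y(t-),q(t),z(t),u(t),k(t-))=0$ (as computed in the proof of Theorem \ref{thm:b2}); since $H$ of \eqref{eq:6.7} is a convex quadratic in $u$ with Hessian $2N\geq 2\delta I>0$ and the control domain $U=\mathbb{R}^m$ is unconstrained, this makes $u(t)$ the pointwise minimizer of $H(t,y(t),q(t),z(t),\cdot,k(t))$. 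Moreover $H$ is jointly convex in $(y,q,z,u)$ — its quadratic part has the nonnegative Hessian assembled from $E,F^i,G^i,N$, the remaining term being linear — and $\phi(y)=\langle My,y\rangle$ is convex because $M\geq 0$. Hence the sufficient optimality theorem of Section 4 applies and shows $u(\cdot)$ is an optimal control; by the uniqueness of the optimal control (Theorem \ref{them:b1}), $u(\cdot)=\bar u(\cdot)$. Then uniqueness for the BSDE \eqref{eq:6.2} (Lemma \ref{lem:1.3}) forces $(y,q,z)=(\bar y,\bar q,\bar z)$ and uniqueness for the SDE \eqref{eq:7.6} (Lemma \ref{lem:1.1}) forces $k$ to equal the process built in the existence step, so the solution of \eqref{eq:f333} is unique.

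For the estimate \eqref{eq:f334} I would exploit the decoupling that optimality gives. Comparing $J(\bar u)$ with $J(0)$: applying the a priori bound \eqref{eq:1.5} to \eqref{eq:6.2} with $u\equiv 0$ (where the free term vanishes) together with the boundedness of $M,E,F^i,G^i$ gives $J(0)\leq KE|\xi|^2$, while Lemma \ref{lem:b4} gives $\delta E\int_0^T|\bar u(t)|^2dt\leq J(\bar u)\leq J(0)$, so $E\int_0^T|\bar u(t)|^2dt\leq KE|\xi|^2$. Feeding this into \eqref{eq:1.5} for \eqref{eq:6.2} with $u=\bar u$, whose free term is $-D\bar u$, yields $E\sup_{0\leq t\leq T}|y(t)|^2+E\int_0^T|q(t)|^2dt+E\int_0^T\|z(t)\|^2_{l^2(\mathbb{R}^n)}dt\leq KE|\xi|^2$. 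Finally, the SDE a priori estimate of Lemma \ref{lem:1.2}, applied to \eqref{eq:7.6} with initial datum $-2My(0)$ and free coefficients $-2Ey,-2F^iq^i,-2G^iz^i$, gives $E\sup_{0\leq t\leq T}|k(t)|^2\leq K\big(E|y(0)|^2+E\int_0^T(|y(t)|^2+|q(t)|^2+\|z(t)\|^2_{l^2(\mathbb{R}^n)})dt\big)\leq KE|\xi|^2$; adding the last two estimates gives \eqref{eq:f334}.

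The hardest part will be the uniqueness argument: since the coupled FBSDE \eqref{eq:f333} has no standalone well-posedness theory in this setting, one must channel both existence and uniqueness through the control problem, and this forces a careful check that every hypothesis of the sufficient optimality theorem — joint convexity of $H$, convexity of $\phi$, and the equivalence of the feedback law with $H_u=0$ over the unconstrained domain — is genuinely satisfied. Once that is in place, the a priori estimate is essentially bookkeeping with the a priori estimates of Section 2, the only real idea being that bounding $\bar u$ in ${\cal A}$ through the cost functional severs the forward-backward coupling.
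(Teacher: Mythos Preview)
Your argument is correct. Existence matches the paper exactly: both invoke Theorem~\ref{thm:b2} to assemble $(k,\bar y,\bar q,\bar z)$ from the optimal pair and its adjoint. The derivation of the estimate \eqref{eq:f334} is also essentially the paper's — the paper merely writes ``using Lemmas~\ref{lem:1.2} and~\ref{lem:1.4}'' where you spell out the decoupling step $\delta\|\bar u\|_{\cal A}^2\le J(\bar u)\le J(0)\le KE|\xi|^2$ that makes those lemmas applicable.

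Where you diverge is uniqueness. The paper argues that uniqueness is ``obvious once a priori estimate \eqref{eq:f334} holds'': since the Hamilton system \eqref{eq:f333} becomes linear in $(k,y,q,z)$ after substituting the feedback law, the difference of two solutions solves the same system with $\xi=0$, and \eqref{eq:f334} then forces it to vanish. You instead route uniqueness through the control problem: any solution furnishes an admissible pair satisfying the hypotheses of the sufficient optimality theorem of Section~4 (convexity of $H$ and $\phi$, pointwise minimization via $H_u=0$), hence yields an optimal control, hence coincides with $\bar u$ by Theorem~\ref{them:b1}, and the rest follows from well-posedness of the decoupled equations. Your route is longer but more self-contained: the paper's linearity argument tacitly requires \eqref{eq:f334} to hold for \emph{every} solution of \eqref{eq:f333}, yet the bound via $J(\bar u)\le J(0)$ is only available once one knows the solution in hand is optimal --- which is precisely what your application of the sufficiency theorem supplies. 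In that sense your argument closes a loop the paper leaves implicit.
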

\begin{proof} The existence result follows from Theorem
\ref{thm:b2} and the uniqueness result is obvious once a priori
estimate \eqref{eq:f334} holds. But noticing Assumptions
\ref{ass:5.1}-\ref{ass:5.3} and using Lemmas \ref{lem:1.2} and
\ref{lem:1.4}, we can deduce \eqref{eq:f334} immediately.
\end{proof}
\vspace{1mm}

In summary, the stochastic Hamilton system \eqref{eq:f333}
completely characterize the optimal control of BLQ problem in this
section. Therefore, solving BLQ problem is equivalent to solving the
stochastic Hamilton system, moreover, the unique optimal control of
the stochastic Hamilton system can be given explicitly by
\eqref{eq:6.9}.


\begin{thebibliography}{10}

\bibitem{BEE03}
K.~Bahlali, M.~Eddahbi and E.~Essaky.
\newblock BSDE associated with L\'{e}vy processes and application to PDIE.
\newblock {\em J. Appl. Math. Stochastic Anal.}, 16:1--17, 2003.

\bibitem{BGM10}
K.~Bahlali, B.~Gherbal and B.~Mezerdi.
\newblock Existence and optimality conditions in stochastic control of linear
BSDEs.
\newblock {\em Random Oper. Stoch. Equ.}, 18:185--197, 2010.

\bibitem{Bah2010}
S.~Bahlali.
\newblock Stochastic controls of backward systems.
\newblock {\em Random Oper. Stoch. Equ.}, 18:125--140, 2010.

\bibitem{Bism73}
J.-M. Bismut.
\newblock Conjugate convex functions in optimal stochastic control.
\newblock {\em J. Math. Anal. Appl.}, 44:384--404, 1973.

\bibitem{DoZh99}
N.~Dokuchaev and X.~Y. Zhou.
\newblock Stochastic controls with terminal contingent conditions.
\newblock {\em J. Math. Anal. Appl.},
238:143--165, 1999.

\bibitem{EkTe76}
I.~Ekeland and R.~T\'{e}mam.
\newblock {\em Convex Analysis and Variational Problems}.
\newblock Amsterdam: North-Holland, 1976.

\bibitem{KPQ97}
N.~El~Karoui, S.~Peng and M.~C. Quenez.
\newblock Backward stochastic differential equations in finance.
\newblock {\em Math. Finance}, 7:1--71, 1997.

\bibitem{Otm06}
M.~El~Otmani.
\newblock Generalized BSDE driven by a L\'{e}vy process.
\newblock {\em J. Appl. Math. Stoch. Anal.}, 25pp, 2006.

\bibitem{Otm08}
M.~El~Otmani.
\newblock Backwark stochastic differnetial equations associated with L\'{e}vy
processes and partial integro-differential eqiations.
\newblock {\em Commun. Stoch. Anal.}, 2:277--288, 2008.

\bibitem{AnZh01}
A.~E.~B.~Lim and X.~Y. Zhou.
\newblock Linear-quadratic control of backward stochastic differential
equations.
\newblock {\em SIAM J. Control Optim.}, 40:450--474, 2001.

\bibitem{AnZh02}
A.~E.~B.~Lim and X.~Y. Zhou.
\newblock Optimal control of linear backward stochastic differential equations
with a quadratic cost criterion.
\newblock {\em Stochastic Theory and Control, Lecture Notes in Control and
Inform. Sci.}, 280:301--317, \newblock Berlin: Springer, 2002.

\bibitem{MeTa08}
Q.~Meng and M.~Tang.
\newblock Necessary and sufficient conditions for optimal control of stochastic systems
associated with L\'{e}vy processes.
\newblock {\em Sci. China Ser. F}, 52:1982--1992, 2009.

\bibitem{MiTa08}
K.~Mitsui and Y.~Tabata.
\newblock A stochastic linear-quadratic problem with L\'{e}vy processes and its
application to finance.
\newblock {\em Stochastic Proecss. Appl.}, 118:120--152, 2008.

\bibitem{NuSc}
D.~Nualart and W.~Schoutens.
\newblock Chaotic and predicatable representation for L\'{e}vy processes.
\newblock {\em Stochastic Process. Appl.}, 90:109--122, 2000.

\bibitem{NuSc01}
D.~Nualart and W.~Schoutens.
\newblock Backward stochastic differential equations and Feynman-Kac formula for L\'{e}vy processes with
applications in finance.
\newblock {\em Bernouli}, 7:761--776, 2001.

\bibitem{PaPe90}
E.~Pardoux and S.~Peng.
\newblock Adapted solution of a backward stochastic differential equation.
\newblock {\em Systems Control Lett.}, 14:55--61, 1990.

\bibitem{ReFa09}
Y.~Ren and X.~L. Fan.
\newblock Refelected backward stochastic differential equations driven by a
L\'{e}vy process.
\newblock {\em ANZIAM J.}, 50:486--500, 2009.

\bibitem{ReOt10}
Y.~Ren and M.~El~Otmani.
\newblock Generalized reflected BSDEs driven by a L\'{e}vy process and an obstacle
problem for PDIEs with a nonlinear Neumann boundary condition.
\newblock {\em J. Comput. Appl. Math.},
233:2027--2043, 2010.

\bibitem{TaWu09}
H.~Tang and Z.~Wu.
\newblock Stochastic differential equations and stochastic linear quadratic
optiaml control problem with L\'{e}vy processes.
\newblock {\em J. Syst. Sci. Complex.}, 22:122--136, 2009.

\bibitem{Tali94}
S.~Tang and X.~Li.
\newblock Necessary conditions for optimal control of stochastic systems with
random jumps.
\newblock {\em SIAM J. Control Optim.}, 32:1447--1475, 1994.

\end{thebibliography}

\end{document}